\definecolor{LinkColor}{rgb}{0,0,0} 
\newtheorem{theorem}{Theorem}[section]
\newtheorem{corollary}[theorem]{Corollary}
\newtheorem{lemma}[theorem]{Lemma}
\newtheorem{proposition}[theorem]{Proposition}
\theoremstyle{definition}
\newtheorem{definition}[theorem]{Definition}
\newtheorem{remark}[theorem]{Remark}
\newcommand{\cut}{\textsf{cut}}
\newcommand{\SL}{\operatorname{SL}}
\newcommand{\GL}{\operatorname{GL}}
\newcommand{\Irr}{\operatorname{Irr}}
\newcommand{\Aut}{\operatorname{Aut}}
\newcommand{\Syl}{\operatorname{Syl}}
\newcommand{\FF}{\mathbb{F}}
\newcommand{\Z}{\textup{Z}}
\newcommand{\U}{\textup{U}}
\newcommand{\C}{\textup{C}}
\newcommand{\N}{\textup{N}}
\newcommand{\QQ}{\mathbb{Q}}
\begin{document}

\title[Integral group rings with trivial central units]{Integral group rings of solvable groups with trivial central units}
\author{Andreas B\"achle}
\address{Vakgroep Wiskunde, Vrije Universiteit Brussel, Pleinlaan 2, 1050 Brussels, Belgium}
\email{\href{mailto:abachle@vub.ac.be}{abachle@vub.ac.be}}
\thanks{The author is a postdoctoral researcher of the FWO (Research Foundation Flanders)}
\subjclass[2010] {16U60, 20C15, 20E45} 
\keywords{integral group ring, central units, inverse semi-rational, solvable groups, prime spectrum, Frobenius groups}

\begin{abstract} The integral group ring $\mathbb{Z} G$ of a group $G$ has only trivial central units, if the only central units of $\mathbb{Z} G$ are $\pm z$ for $z$ in the center of $G$. We show that the order of a finite solvable group $G$ with this property, can only have $2$, $3$, $5$ and $7$ as prime divisors, by linking this to inverse semi-rational groups and extending one result on this class of groups. We also classify the Frobenius groups whose integral group rings have only trivial central units.
\end{abstract}

\maketitle

\section{Introduction}

By $G$ we always denote a finite group. The integral group ring of $G$ is denoted by $\mathbb{Z} G$ and its group of units by $\U(\mathbb{Z} G)$. Clearly, all elements of $\pm G$ are units in $\mathbb{Z} G$, the so-called \emph{trivial units}. Already G.~Higman determined in his PhD thesis in 1940 the groups such that all units in $\mathbb{Z}G$ are trivial: all of them are Dedekind groups of small exponent. If $\pi(G)$ denotes the \emph{prime spectrum of $G$}, i.e.\ the set of prime divisors of the order of $G$, then $\pi(G) \subseteq \{2, 3\}$ in all these cases and clearly all of these groups are solvable.

It remains a fundamental problem to describe generators for $\U(\mathbb{Z} G)$ if not all units are trivial. It is well-known that the units of reduced norm $1$ together with the central units generate a subgroup of finite index in $\U(\mathbb{Z} G)$. For many finite groups it has been shown by E.~Jespers, G.~Leal, J.~Ritter and S.~Sehgal that the so-called bicyclic units generate a subgroup of finite index in the subgroup of reduced norm $1$ elements. These together with the so-called Bass units then generate a subgroup of finite index in the central units, see e.g.\ the recent monograph of E.~Jespers and \'A.~del R\'io \cite{JdR1}, Chapter~11, for a detailed treatment. It is relevant to know when the Bass units can be omitted, in other words when all central units are trivial, i.e.\ belong to $\pm \Z(G)$. Ritter and Sehgal studied groups $G$ with this property \cite{RS}. They obtained a description of the centers of the Wedderburn components of the rational group algebra of such groups together with a group theoretic characterization in terms of conditions on conjugacy classes, see Proposition \ref{RitterSehgal} below. This class of groups has recently again been extensively studied by G.K.~Bakshi, S.~Maheshwary and I.B.S.~Passi \cite{BMP,Maheshwary}. We adopt their notation for these groups.

\begin{definition} A group $G$ is said to be a \emph{\cut\ group} (``central units trivial'') if $\mathbb{Z} G$ only contains trivial central units, that is, if $u \in \U(\mathbb{Z} G)$ is central, then $u = \pm z$ for some $z \in \Z(G)$, the center of $G$. \end{definition}

In \cite[Theorems 1 and 2]{Maheshwary} Maheshwary shows that $\pi(G) \subseteq \{2, 3, 5, 7\}$ for $G$ a \cut\ group of odd order or a solvable \cut\ group of even order such that all elements are of prime power order. We show that this is true for all solvable groups:

\begin{theorem}\label{ps_solv_cut} Let $G$ be a solvable \cut\ group. Then $\pi(G) \subseteq \{2, 3, 5, 7\}$.\end{theorem}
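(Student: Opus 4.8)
The plan is to work with the group-theoretic reformulation of the \cut\ property (Proposition~\ref{RitterSehgal}): a finite group is \cut\ if and only if it is \emph{inverse semi-rational}, i.e.\ for every $g$ in the group and every integer $j$ coprime to $|g|$, the power $g^{j}$ is conjugate to $g$ or to $g^{-1}$. This property passes to quotients — conjugacy and taking powers descend, and any residue class coprime to $|\bar g|$ lifts to one coprime to $|g|$ — and that, together with the original definition, is all I will use. Suppose the theorem fails and let $G$ be a solvable \cut\ counterexample of least order, with $p\ge 11$ dividing $|G|$. For every $1\ne M\trianglelefteq G$ the group $G/M$ is a smaller solvable \cut\ group, so $\pi(G/M)\subseteq\{2,3,5,7\}$ and hence $p\nmid|G/M|$; thus every nontrivial normal subgroup of $G$ contains a full Sylow $p$-subgroup. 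Applied to a minimal normal subgroup $V$ of $G$ (automatically elementary abelian, as $G$ is solvable), this forces $V$ to be a $p$-group with $V\in\Syl_{p}(G)$, so $V=O_{p}(G)$; applied to $M=V$ it gives $p\nmid|G/V|$, whence $G=V\rtimes H$ by Schur--Zassenhaus, where $H\cong G/V$ is a solvable \cut\ group with $\pi(H)\subseteq\{2,3,5,7\}$ and $\gcd(|H|,p)=1$, and $V$ is a simple $\FF_{p}[H]$-module (its $H$-submodules coincide with its $G$-submodules).

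Fix $0\ne v\in V$ and put $X_{v}:=\{c\in(\ZZ/p)^{\times}: cv\in H\cdot v\}$, a subgroup of $(\ZZ/p)^{\times}$ because the $H$-action and the scalar action on $V$ commute. For $i,j\in(\ZZ/p)^{\times}$ the elements $v^{i}$ and $v^{j}$ are $G$-conjugate precisely when $i/j\in X_{v}$, so inverse semi-rationality applied to the order-$p$ element $v$ reads $(\ZZ/p)^{\times}=X_{v}\cup(-X_{v})$. This forces $[(\ZZ/p)^{\times}:X_{v}]\le 2$, and if the index equals $2$ then $X_{v}$ is the group of squares, so $-1\notin X_{v}$ and $p\equiv 3\pmod 4$. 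Moreover $X_{v}$ acts freely on the orbit $H\cdot v$, so $(p-1)/2\le|X_{v}|$ divides $|H\cdot v|$, which divides $|H|$. Next let $\FF_{p^{f}}=\operatorname{End}_{\FF_{p}[H]}(V)$ and $e=\dim_{\FF_{p^{f}}}V$, so that $H$ acts $\FF_{p^{f}}$-linearly and absolutely irreducibly on $V$. If $e=1$, then $H$ acts through a cyclic subgroup $C\le\FF_{p^{f}}^{\times}$; being a quotient of the \cut\ group $H$, this cyclic group is \cut, so $(\ZZ/|C|)^{\times}=\{\pm1\}$ and $|C|\in\{1,2,3,4,6\}$. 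But $X_{v}=C\cap(\ZZ/p)^{\times}$ has order $\ge 5$, and the only group in that list with a subgroup of order $\ge 5$ is $\ZZ/6$; hence $X_{v}=C$, so $H$ acts on $V$ through scalars from the prime field $\FF_{p}$, and simplicity of $V$ forces $f=1$. Then $|C|=6$ divides $p-1$ while $6\ge(p-1)/2$, so $p=13$; but then $[(\ZZ/p)^{\times}:X_{v}]=2$ with $13\equiv 1\pmod 4$, a contradiction. Hence $e\ge 2$.

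The case $e\ge 2$ is the crux, and the main obstacle. Here $\bar H:=H/\C_{H}(V)$ is a solvable \cut\ group with $\pi(\bar H)\subseteq\{2,3,5,7\}$ acting faithfully and absolutely irreducibly on an $e$-dimensional $\FF_{p^{f}}$-space, $e\ge 2$, and every nonzero $\bar H$-orbit on $V$ has length divisible by $(p-1)/2\ge 5$; note that the bare divisibility $(p-1)/2\mid|H|$ leaves infinitely many candidate primes, so the module structure must be exploited. I would now run the imprimitivity dichotomy for $\bar H$. In the imprimitive case one analyses the monomial structure: $\bar H$ transitively permutes a nontrivial block system, and the orbit condition pins the group of scalars realised on the lines of a block to within index $2$ in $(\ZZ/p)^{\times}$; combining this with $\pi(H)\subseteq\{2,3,5,7\}$ and the fact that the relevant quotients of $H$ are themselves \cut\ yields a contradiction once $p$ is large. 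In the primitive case one uses the classical structure of solvable primitive linear groups — $\bar H$ normalises a group $E$ of symplectic type with $V|_{E}$ homogeneous and $\bar H/E\,\Z(\bar H)$ embedded in a symplectic group over a small field — and shows such a group is too small for all its orbits on $V\setminus\{0\}$ to have length divisible by $(p-1)/2$. The finitely many small primes (in particular $p=11$ and $p=13$) surviving these estimates are cleared by hand, using for instance that $p=13\equiv 1\pmod 4$ forces $X_{v}=(\ZZ/13)^{\times}$, so that the $\bar H$-orbits are unions of full prime-field lines and $12\mid|H|$. This last part, which removes the hypotheses of odd order or of all elements having prime power order present in \cite{Maheshwary}, is where the known result on inverse semi-rational groups is extended, and it is the technically hardest step.
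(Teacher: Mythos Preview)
Your reduction to $G=V\rtimes H$ with $V$ an elementary abelian $p$-group, $p\nmid|H|$, and $V$ a simple $\FF_{p}[H]$-module is correct and matches the paper. Your treatment of the one-dimensional case $e=1$ is clean. The gap is the case $e\ge 2$: what you have written there is a strategy, not a proof. You describe an imprimitive/primitive dichotomy and assert that ``combining this with $\pi(H)\subseteq\{2,3,5,7\}$\ldots\ yields a contradiction once $p$ is large'' and that in the primitive case the group is ``too small'', but none of these claims is established. You yourself flag this as ``the technically hardest step''; it is in fact the entire content of the theorem beyond the easy reductions, and without it the argument is incomplete.

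The paper avoids this work by importing two results you did not use. First, it quotes Chillag--Dolfi \cite[Theorem~2]{CD}, which already gives $\pi(G)\subseteq\{2,3,5,7,13\}$ for solvable inverse semi-rational groups; so only $p=13$ remains. Second, and this is the key point, the paper invokes a theorem of Farias~e~Soares on the \emph{$k$-eigenvalue property} (restated as Theorem~\ref{TheoremFeS}): if a solvable $H$ with $p\nmid|H|$ acts on an $\FF_{p}$-space with the $k$-eigenvalue property, and the Brauer character $\varphi$ of the module satisfies $|\QQ(\varphi):\QQ|\le 2$ and $k=|\QQ(\varphi)(\zeta_{p}):\QQ(\varphi)|$, then $p\in\{2,3,5,7\}$. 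Your observation $[(\ZZ/p)^{\times}:X_{v}]\le 2$ (and indeed $X_{v}=(\ZZ/13)^{\times}$ since $13\equiv 1\bmod 4$) is exactly the $12$-eigenvalue property; the remaining input is that $\QQ(\varphi)$ has degree at most $2$ over $\QQ$, which the paper gets from the \cut\ hypothesis on $H$ together with \cite[Lemma~3]{Tent}, and that $\QQ(\varphi)\cap\QQ(\zeta_{13})=\QQ$ because the quadratic subfield of $\QQ(\zeta_{13})$ is real while $\QQ(\varphi)$ is imaginary quadratic or $\QQ$. In effect, the primitive/imprimitive analysis you sketch is precisely what Farias~e~Soares carried out to prove that theorem; you should either cite it, or actually perform the analysis rather than outline it.
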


This is best possible in the sense that none of the primes can be removed as can be seen from the groups in Theorem \ref{Frobenius_cut}. Note that for non-solvable \cut\ groups such a restriction on the prime spectrum cannot exist as all Weyl groups, in particular all symmetric groups $S_n$, are \cut\ groups. There is also a group theoretical interpretation of and interest in \cut\ groups. By Proposition \ref{RitterSehgal}, a \cut\ group is nothing but an inverse semi-rational group as introduced by D.~Chillag and S.~Dolfi in \cite{CD}. And this is also the path we will take to prove Theorem \ref{ps_solv_cut}: we will improve one of the results of Chillag and Dolfi on the prime spectrum of such groups that are solvable.

\vspace{\baselineskip}

As a second result, we classify the Frobenius groups with only trivial central units in their integral group rings. 
\begin{theorem}\label{Frobenius_cut} Let $K$ be a Frobenius complement.
\begin{enumerate}
  \item\label{theorem_even_complement} 
If $|K|$ is even and $K$ is the complement of a \cut\ Frobenius group $G$, then  $G$ is isomorphic to a group in the series in \eqref{theo:C32bC2} -- \eqref{theo:C72dQ8C3} with $b, c, d \in \mathbb{Z}_{\geq 1}$ or $G$ is one of the groups in \eqref{theo:C52Q8} -- \eqref{theo:C72SL23}.
  \vspace{-\baselineskip}
  \begin{multicols}{2}
   \begin{enumerate}
    \item\label{theo:C32bC2} $C_3^{b} \rtimes C_2$
    \item\label{theo:C32bC4} $C_3^{2b} \rtimes C_4$
    \item\label{theo:C32bQ8} $C_3^{2b} \rtimes Q_8$
    \item\label{theo:C5cC4} $C_5^{c} \rtimes C_4$
    \item\label{theo:C7dC6} $C_7^{d} \rtimes C_6$
    \item\label{theo:C72dQ8C3} $C_7^{2d} \rtimes (Q_8 \times C_3)$
    \end{enumerate}\columnbreak
    \begin{enumerate}[label=(\greek*),ref=\theenumi\greek*]
    \item\label{theo:C52Q8} $C_5^{2} \rtimes Q_8$
    \item\label{theo:C52C3C4} $C_5^{2} \rtimes (C_3 \rtimes C_4)$
    \item\label{theo:C52SL23} $C_5^{2} \rtimes \SL(2,3)$
    \item\label{theo:C72SL23} $C_7^{2} \rtimes \SL(2,3)$
   \end{enumerate}
  \end{multicols}
  \vspace{-\baselineskip}
  \noindent Conversely, for each of the above structure descriptions there is a \cut\ Frobenius group of that form and it is unique up to isomorphism.

 \item\label{odd_complement} If $|K|$ is odd, then there is a \cut\ Frobenius group $G$ with complement $K$ and Frobenius kernel $F$ if and only if $K \simeq C_3$ and one of the following holds
   \begin{enumerate}
     \item $F$ is a \cut\ $2$-group admitting a fixed point free automorphism of order $3$.\\ In particular, $F$ has order $2^{a}$ for some $a \in 2\mathbb{Z}_{\geq 1}$ and is an extension of an abelian group of exponent a divisor of $4$ by an abelian group of exponent a divisor of $4$.
     \item\label{kernel7group} $F$ is an extension of an elementary abelian $7$-group by an elementary abelian $7$-group, $\exp F = 7$ and $F$ admits a fixed point free automorphism of order $3$ fixing each cyclic subgroup of $F$.
   \end{enumerate}
\end{enumerate}
\end{theorem}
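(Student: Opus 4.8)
The plan is to treat the two parts of the statement separately, in each case passing from \cut\ to \emph{inverse semi-rational} (ISR) by Proposition~\ref{RitterSehgal} and then organising the argument around the prime spectrum of the Frobenius kernel. Throughout, write $G = F \rtimes K$ with Frobenius kernel $F$ and complement $K$, and recall the standard structure of Frobenius groups: $F$ is nilpotent; $\gcd(|F|,|K|) = 1$; every non-identity element of $K$ acts fixed-point-freely (FPF) on $F$ and on each non-trivial $K$-invariant subgroup of $F$; subgroups of $K$ are again Frobenius complements, hence have cyclic or generalised quaternion Sylow subgroups; and if $|K|$ is even, $K$ has a unique involution, which is central. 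A routine argument — lift a generator of a cyclic subgroup of $G/N$ to a generator of a cyclic subgroup of $G$, correcting the exponent by the Chinese Remainder Theorem — shows that the class of \cut\ (equivalently ISR) groups is closed under quotients, so $K \simeq G/F$ is \cut\ whenever $G$ is. The basic local tool is that for $1 \ne f \in F$ of order $n$, the $G$-conjugates of $f$ lying in $\langle f\rangle$ form exactly the orbit of $f$ under $\overline N := N_G(\langle f\rangle)/C_G(\langle f\rangle) \le \Aut(\langle f\rangle) = (\ZZ/n)^{*}$; hence $G$ is ISR at $f$ precisely when $\overline N \cup (-\overline N) = (\ZZ/n)^{*}$, and likewise inside $K$.

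For part~\ref{theorem_even_complement}, the unique central involution $t \in K$ is a FPF automorphism of $F$ of order $2$, so $F$ is abelian and $t$ inverts $F$; thus $-1 \in \overline N$ for every cyclic $\langle f\rangle \le F$, and ISR at $f$ forces $\overline N = (\ZZ/n)^{*}$. From this I extract, in turn: $\exp F$ is squarefree (an element of order $p^{2}$ would make $(\ZZ/p^{2})^{*}$, whose order is divisible by $p$, a section of $K$, against $p \mid |F|$ and $\gcd(|F|,|K|) = 1$); and $\pi(F)$ is a single prime $p \in \{3,5,7\}$ (by Theorem~\ref{ps_solv_cut}, $p \in \{3,5,7\}$; an element of order $pq$ would make $C_{p-1} \times C_{q-1}$ a section of $K$, which for each pair $p \ne q$ in $\{3,5,7\}$ is ruled out either by coprimality or by the constraint that a Sylow $2$-subgroup of a section of $K$ be a quotient of a cyclic or generalised quaternion $2$-group). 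Hence $F \simeq C_p^{m}$. It remains to pin down $K$: it is a \cut\ Frobenius complement of even order, acts faithfully and FPF on $C_p^{m}$, satisfies $p \nmid |K|$, and $N_K(L)$ induces all of $\FF_p^{*}$ on every line $L \le F$. The decisive extra ingredient is that any group in which an element of order $4$ acts faithfully on a normal subgroup of prime order $\ell$ cannot act FPF on an $r$-group with $r \notin \{2,\ell\}$ — the order-$4$ element always acquires a fixed vector — which, together with the Frobenius and \cut\ constraints, leaves exactly the complements appearing in \eqref{theo:C32bC2}--\eqref{theo:C72SL23}. The converse and the uniqueness statements then amount to a finite check: in each listed group every element has order at most $6$, so the ISR condition is verified directly on the handful of cyclic subgroups, while rigidity of the relevant faithful $\FF_p[K]$-modules (for instance, for $p=5$, $K=C_4$ the generator must act as a scalar, and for $p=5$, $K=Q_8$ only $m=2$ survives, since the six eigenlines of the order-$4$ elements then exhaust the six lines of $\FF_5^{2}$) supplies the uniqueness.

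For part~\ref{odd_complement}, $K$ has no involution and, in a group of odd order, a non-identity element is never conjugate to its inverse, so ISR at $1 \ne g \in K$ forces $\overline N$ to have index exactly $2$ in $(\ZZ/\mathrm{ord}(g))^{*}$ and to omit $-1$. As $(\ZZ/n)^{*}$ has an index-$2$ subgroup omitting $-1$ only when some prime $\equiv 3 \pmod 4$ divides $n$, and $\pi(K) \subseteq \{2,3,5,7\}$ by Theorem~\ref{ps_solv_cut}, we get $\pi(K) \subseteq \{3,7\}$; and $7 \in \pi(K)$ leads either to a non-cyclic subgroup $C_7 \rtimes C_3$ of $K$ (impossible in a Frobenius complement) or to an element of order $21$ at which ISR fails, so $K$ is a \cut\ $3$-group which is a Frobenius complement, forcing $K \simeq C_3$. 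Finally, with $K \simeq C_3$, the kernel $F$ is nilpotent with a FPF automorphism $\sigma$ of order $3$, hence of nilpotency class at most $2$ (Higman); examining the ISR condition on elements of $F$ — now no element inverts a given $f$, and $\sigma$ acts trivially on any cyclic subgroup it normalises — yields $\pi(F) = \{2\}$ or $\{7\}$ (an element of order $2\cdot 7$ would have trivial $\overline N$, killing the mixed case), that $\sigma$ normalises every cyclic subgroup of $F$ (otherwise $\overline N$ is trivial), that $\exp F$ divides $4$, respectively equals $7$, and — using class $\le 2$ and quotient-closure of \cut\ — that $F$ is an extension of an abelian group of the stated exponent by one of the same kind; conversely each such $F$ gives a \cut\ Frobenius group with complement $C_3$. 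The step I expect to be the main obstacle is not this skeleton but the bookkeeping inside part~\ref{theorem_even_complement}: uniformly ruling out the FPF actions that would produce non-\cut\ groups, and then carrying out the module-rigidity computations over $\FF_3$, $\FF_5$ and $\FF_7$ that fix both kernel and complement and establish the uniqueness claims.
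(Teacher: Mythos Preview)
Your overall strategy --- pass to the ISR characterisation, show that for even $|K|$ the kernel is elementary abelian on a single prime $p\in\{3,5,7\}$, then match complements to kernels --- is close to the paper's, though the paper runs it in the opposite order: it first classifies the eight possible \cut\ complements (Propositions~\ref{SylowSubgroupsComplement} and~\ref{FrobeniusComplements}) and only then determines the admissible kernels for each. Your exclusion of mixed-prime kernels, via the observation that $(\ZZ/pq)^{*}$ would have to embed in $K$ but has non-cyclic, non-quaternion Sylow $2$-subgroup for every $p\ne q$ in $\{3,5,7\}$, is in fact tidier than the paper's case-by-case treatment.

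That said, there are genuine gaps. You invoke Theorem~\ref{ps_solv_cut} without first establishing that $G$ (equivalently $K$) is solvable; the paper does this in Proposition~\ref{SylowSubgroupsComplement} via Zassenhaus's theorem, checking that neither $\SL(2,5)$ nor the relevant double cover of $S_5$ is \cut. Your assertion that ``in each listed group every element has order at most $6$'' is false: the groups in \eqref{theo:C7dC6}, \eqref{theo:C72dQ8C3}, \eqref{theo:C72SL23} contain elements of order $7$ and, in \eqref{theo:C72dQ8C3}, of order $12$. Most seriously, the existence and uniqueness bookkeeping you flag as ``the main obstacle'' really is where the content lies, and your sketch does not supply it. The paper's Lemma~\ref{copies_of_cut_groups} is the key device: whether $V^s\rtimes K$ stays \cut\ for $s\ge 2$ is governed by whether the \emph{centre} of $K$ already provides enough scalars, and this is exactly what separates the infinite families (e.g.\ $C_7^{2d}\rtimes(Q_8\times C_3)$, where $\Z(K)\simeq C_6$ supplies all of $\FF_7^\times$ up to sign) from the sporadic cases (e.g.\ $C_5^2\rtimes Q_8$, where $\Z(Q_8)$ gives only $\pm 1$, so $c=2$ is forced). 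Your proposal also gives no mechanism to see that $C_7^d\rtimes(C_3\rtimes C_4)$ and $C_5^c\rtimes(Q_8\times C_3)$ are \emph{never} \cut; the paper eliminates both by computing real character fields of the unique candidate. Finally, in part~\eqref{odd_complement}, the claim that ``$\sigma$ acts trivially on any cyclic subgroup it normalises'' is wrong for $p=7$: there $\sigma$ must induce an order-$3$ automorphism on each $\langle f\rangle$ it normalises, and this is precisely what makes $f$ inverse semi-rational; the correct split is that for $p=2$ the ISR condition is automatic once $o(f)\mid 4$, whereas for $p=7$ it forces $\sigma$ to normalise every cyclic subgroup of $F$.
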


Theorem \ref{ps_solv_cut} is proved in Section \ref{sect:solvable}, while Theorem \ref{Frobenius_cut} is proved in Section \ref{sect:Frobenius}. We give corollaries describing \cut\ Frobenius groups with abelian Frobenius kernel (Corollary \ref{cor:abelian_kernel}) and \cut\ Camina groups (Corollary \ref{cor:camina}), see the end of Section \ref{sect:Frobenius}.

The notation is mainly standard. $C_n$ denotes a cyclic group of order $n$, $Q_8$ the quaternion group of order $8$. $\FF_q$ denotes a finite field with $q$ elements. Euler's totient function is indicated by $\phi$. Standard references are \cite{Huppert1,ScottGT}.

\section{Preliminaries}

We need the following group theoretical notion of (inverse) semi-rational groups that was coined by Chillag and Dolfi \cite{CD}. By $\sim$ we denote conjugacy (in a group that should always be clear from the context).

\begin{definition} Let $G$ be a group and $x \in G$. 
\begin{enumerate}
 \item $x$ is \emph{rational in $G$} if $x^j\sim x$ for all $j \in \mathbb{Z}$ with $(j, o(x)) = 1$.
 \item The element $x$ is called \emph{inverse semi-rational in $G$} if $x^j$ is conjugate to $x$ or $x^{-1}$ for all $j \in \mathbb{Z}$ with $(j, o(x)) = 1$.
 \item $x$ is \emph{semi-rational in $G$} if each if $x^j$ is conjugate to $x$ or $x^m$ for all $j \in \mathbb{Z}$ with $(j, o(x)) = 1$ for some fixed $m$.
\end{enumerate}
A group is called \emph{rational} (respectively \emph{inverse semi-rational}, \emph{semi-rational}) if every of its elements is respectively of that type.  \end{definition}
 
For a character $\psi$ (ordinary or Brauer), $\QQ(\psi)$ denotes the field of character values of $\psi$, that is, the field extension of $\QQ$ generated by $\{\psi(g) \colon g \in G\}$ (respectively by $\{\psi(g) \colon g \in G_{p'}\}$ if $\psi$ is a $p$-Brauer character and $G_{p'}$ denotes the elements of $G$ with an order not divisible by $p$). It is well-known that a group $G$ is rational if and only if  for each $\chi \in \Irr(G)$, $\QQ(\chi) = \QQ$, whence the name.
 
 The following characterization of \cut\ groups is essentially due to Ritter and Sehgal. We link it to inverse semi-rational groups.

\begin{proposition}\label{RitterSehgal} Let $G$ be a group. The following are equivalent.
\begin{enumerate}
 \item\label{prop:cut} $G$ is a \cut\ group.
 \item\label{prop:isr} $G$ is inverse semi-rational.
 \item\label{prop:conjugategenerators} For every $x \in G$ and $k \in \mathbb{Z}$ with $(k, |G|) = 1$, $x^k \sim x$ or $x^k \sim x^{-1}$.
 \item\label{prop:wedderburndecomp} If $\QQ G \simeq \bigoplus_{k = 1}^m M_{n_k}(D_k)$ is the Wedderburn decomposition ($m, n_k \in \mathbb{Z}_{\geq 1}$, $D_k$ rational division algebras), then for each $k$, \[ \Z(D_k) \simeq \QQ(\sqrt{-d})\] for some $d \in \mathbb{Z}_{\geq 0}$, where $\Z$ denotes the center.
 \item\label{prop:char} For each $\chi \in \Irr(G)$, $\QQ(\chi) = \mathbb{Q}(\sqrt{-d})$ for some $d \in \mathbb{Z}_{\geq 0}$, i.e.\ the field of character values of $\chi$ is $\QQ$ or an imaginary quadratic extension of $\QQ$ for each absolutely irreducible character of $G$.
\end{enumerate}
\end{proposition}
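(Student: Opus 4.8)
The plan is to prove the cycle of implications $(1)\Leftrightarrow(4)\Leftrightarrow(5)\Leftrightarrow(3)\Leftrightarrow(2)$. Two of these four links are routine, the link $(1)\Leftrightarrow(4)$ is the Ritter--Sehgal analysis of central units through the Dirichlet unit theorem, and the link $(3)\Leftrightarrow(5)$ rests on the Brauer permutation lemma relating the action on conjugacy classes to the action on characters.

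I take the routine links first. For $(2)\Leftrightarrow(3)$: the implication $(2)\Rightarrow(3)$ is immediate because $o(x)\mid |G|$, so $\gcd(k,|G|)=1$ forces $\gcd(k,o(x))=1$. For $(3)\Rightarrow(2)$, given $x\in G$ and $j$ with $\gcd(j,o(x))=1$, I would use the Chinese Remainder Theorem to produce $k\in\ZZ$ with $k\equiv j\pmod{o(x)}$ and $\gcd(k,|G|)=1$: coprimality to the primes dividing $o(x)$ is automatic from $k\equiv j$, and for the remaining prime powers dividing $|G|$ one prescribes an arbitrary nonzero residue. Then $x^{k}=x^{j}$ and $(3)$ applies. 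For $(4)\Leftrightarrow(5)$: the centers $\Z(D_k)$ of the Wedderburn components of $\QQ G$ are precisely the fields $\QQ(\chi)$, with $\chi$ running over a set of representatives of the Galois conjugacy classes in $\Irr(G)$, and every $\QQ(\chi)$ occurs in this way; since a number field is of the form $\QQ(\sqrt{-d})$ with $d\in\ZZ_{\geq 0}$ exactly when it equals $\QQ$ or is imaginary quadratic, $(4)$ and $(5)$ are the same statement.

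For $(1)\Leftrightarrow(4)$, the central units of $\ZZ G$ are the elements of $\U(\Z(\ZZ G))$, and $\Z(\ZZ G)$ is an order in $\Z(\QQ G)=\bigoplus_k \Z(D_k)$. By the Dirichlet unit theorem this unit group is finite if and only if each $\Z(D_k)$ has exactly one archimedean place, that is, is $\QQ$ or an imaginary quadratic field; so finiteness of the central unit group is equivalent to $(4)$. It then remains to invoke the classical fact that central torsion units of $\ZZ G$ are trivial (a consequence of the Berman--Higman theorem), so that the torsion subgroup of $\U(\Z(\ZZ G))$ is exactly $\pm\Z(G)$; hence the central unit group is finite if and only if it coincides with $\pm\Z(G)$, i.e.\ if and only if $G$ is a \cut\ group.

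The main step is $(3)\Leftrightarrow(5)$, and a naive element-by-element comparison fails, since in $(3)$ the choice between $x^k\sim x$ and $x^k\sim x^{-1}$ may depend on $x$, and in $(5)$ -- rewritten as: for every $\chi\in\Irr(G)$ and every $k$ with $\gcd(k,|G|)=1$, $\chi^{(k)}=\chi$ or $\chi^{(k)}=\bar\chi$, where $\chi^{(k)}(g)=\chi(g^k)$ -- the choice may depend on $\chi$. The remedy is the refined Brauer permutation lemma. Putting $n=\exp G$ and $A=(\ZZ/n\ZZ)^{\times}$, the group $A$ acts on the conjugacy classes of $G$ by $k\cdot[x]=[x^{k}]$ and on $\Irr(G)$ by Galois conjugation $k\cdot\chi=\chi^{(k)}$; since the invertible character table intertwines these two permutation representations, the number of fixed points of $b\in A$ on conjugacy classes equals the number of fixed points of $b^{-1}$ on $\Irr(G)$, and by Burnside's lemma every subgroup $B\leq A$ then has the same number of orbits on conjugacy classes as on $\Irr(G)$. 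Now $(3)$ says that every $A$-orbit on conjugacy classes equals the $\langle-1\rangle$-orbit $\{[x],[x^{-1}]\}$ it contains, which -- as each $A$-orbit is a union of $\langle-1\rangle$-orbits -- is equivalent to the number of $A$-orbits equalling the number of $\langle-1\rangle$-orbits on conjugacy classes; likewise, using that $\QQ(\chi)$ has degree at most $2$ and is imaginary of degree exactly $2$ precisely when the Galois orbit of $\chi$ equals $\{\chi,\bar\chi\}$, statement $(5)$ is equivalent to the number of $A$-orbits equalling the number of $\langle-1\rangle$-orbits on $\Irr(G)$. Applying the permutation lemma to $B=A$ and to $B=\langle-1\rangle$ makes these two equalities equivalent, giving $(3)\Leftrightarrow(5)$. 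I expect the delicate point to be exactly this last step: reformulating $(5)$ in terms of Galois orbits and carrying out the orbit-count bookkeeping, in particular making sure the real quadratic fields are the ones excluded on the character side.
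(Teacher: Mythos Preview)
Your proposal is correct and considerably more detailed than the paper's own proof. The paper simply cites Ritter--Sehgal \cite{RS} for the equivalence of \eqref{prop:cut}, \eqref{prop:conjugategenerators}, \eqref{prop:wedderburndecomp}, \eqref{prop:char}, and only supplies the short argument for $\eqref{prop:conjugategenerators}\Rightarrow\eqref{prop:isr}$, using Dirichlet's theorem on primes in arithmetic progressions to replace $j$ (coprime to $o(x)$) by a $k\equiv j\pmod{o(x)}$ coprime to $|G|$. Your use of the Chinese Remainder Theorem for this step is just as valid and arguably more elementary.

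What is genuinely different is that you give a self-contained argument for the implications the paper outsources. Your $\eqref{prop:cut}\Leftrightarrow\eqref{prop:wedderburndecomp}$ via the Dirichlet unit theorem combined with Berman--Higman is the standard route (and is essentially what is behind the Ritter--Sehgal result). Your $\eqref{prop:conjugategenerators}\Leftrightarrow\eqref{prop:char}$ via the Brauer permutation lemma and orbit counting for the two subgroups $A=(\ZZ/n\ZZ)^\times$ and $\langle -1\rangle$ is a clean, conceptual proof; the delicate point you flag---that a degree-$2$ character field is imaginary precisely when the Galois orbit $\{\chi,\sigma\chi\}$ coincides with the $\langle -1\rangle$-orbit $\{\chi,\bar\chi\}$, whereas a real quadratic $\QQ(\chi)$ has $\bar\chi=\chi$ and hence a strictly smaller $\langle -1\rangle$-orbit---is exactly what makes the orbit-count equivalence work, and you handle it correctly. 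One minor remark: the compatibility in Brauer's lemma indeed matches the action of $b$ on classes with that of $b^{-1}$ on characters, but since $A$ is abelian the map $b\mapsto b^{-1}$ is a bijection on every subgroup, so Burnside's lemma still gives equal orbit counts; you noted this, and it is fine.
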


\begin{proof} The equivalence of \eqref{prop:cut}, \eqref{prop:conjugategenerators}, \eqref{prop:wedderburndecomp}, and \eqref{prop:char} is proved in \cite{RS}. Clearly, \eqref{prop:isr} implies \eqref{prop:conjugategenerators}. To see that \eqref{prop:conjugategenerators} $\Rightarrow$ \eqref{prop:isr}, let $x \in G$ and $j \in \mathbb{Z}$ with $(j, o(x)) = 1$ be given. We can choose $k = o(x)\cdot s + j$ for some $s\in \mathbb{Z}$ such that $(k, |G|) = 1$ (this is possible, for instance,  by Dirichlet's Theorem on the infinitude of primes in arithmetic progressions
). 
Then $x^j = x^k$ is conjugate to $x$ or $x^{-1}$, by \eqref{prop:conjugategenerators}. \end{proof}

We will use these equivalences freely without further mention. From \eqref{prop:wedderburndecomp} it easily follows that each quotient of a \cut\ group is again a \cut\ group. Note that $\mathbb{Q}(G)$, the field extension of the rationals generated by the values of all irreducible characters of the \cut\ group $G$ might have degree larger than $2$ over $\mathbb{Q}$ as can already be seen by the non-abelian group of order $21$. We have
\[ \{G \colon G \text{ is rational} \} \quad \subseteq \quad \{G \colon G \text{ is \cut} \} \quad \subseteq \quad \{G \colon G \text{ is semi-rational} \}.\]
In case of groups of odd order, the first class only contains the trivial group, and the last two classes coincide, cf.\ \cite[Remark 13]{CD}.

We will use the following notation introduced by Chillag and Dolfi together with their observations we state here.

\begin{definition} Let $G$ be a group, $x \in G$. Set $B_G(x) = \N_G(\langle x \rangle)/\C_G(\langle x \rangle)$, the automorphisms of $\langle x \rangle$ induced by elements of $G$. We identify $B_G(x)$ with the corresponding subgroup of $\Aut(\langle x \rangle)$. \end{definition}

\begin{lemma}[{\cite[Lemma 5 (2), (3)]{CD}}]\label{lemma_CD} Let $G$ be a group.
\begin{enumerate}
 \setcounter{enumi}{1}
 \item An element $x$ is inverse semi-rational in $G$ if and only if $\Aut(\langle x \rangle) = B_G(x)\langle \tau \rangle$, where $\tau \in \Aut(\langle x \rangle)$ is the inversion automorphism of $\langle x \rangle$ defined by $\tau(y) = y^{-1}$. In particular for an inverse semi-rational element $x$ in $G$, we have $[\Aut(\langle x \rangle) : B_G(x)] \leq 2$.
 \item If $p$ is a prime of the form $p \equiv 1 \bmod 4$, then a $p$-element $x$ is inverse semi-rational in $G$ if and only if $x$ is rational in $G$.
\end{enumerate}
\end{lemma}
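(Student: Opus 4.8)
The plan is to reduce both statements to elementary facts about the abelian group $\Aut(\langle x\rangle)\cong(\ZZ/o(x)\ZZ)^\times$, after first setting up a dictionary between conjugacy of powers of $x$ and cosets of $B_G(x)$. Fix $j\in\ZZ$ with $(j,o(x))=1$, so that $\langle x^j\rangle=\langle x\rangle$. If $g\in G$ satisfies $gxg^{-1}=x^j$, then $g$ normalises $\langle x\rangle$ and induces on it the power automorphism $y\mapsto y^j$; conversely, any element of $\N_G(\langle x\rangle)$ inducing that power map conjugates $x$ to $x^j$. Identifying $\Aut(\langle x\rangle)$ with $(\ZZ/o(x)\ZZ)^\times$ and $\tau$ with $-1$, this yields
\[ x^j\sim x \iff \bar\jmath\in B_G(x), \qquad x^j\sim x^{-1}\iff -\bar\jmath\in B_G(x)\iff \bar\jmath\in\tau B_G(x), \]
where $\bar\jmath$ denotes the class of $j$ modulo $o(x)$.

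For the first part I would argue as follows. By the dictionary, $x$ is inverse semi-rational precisely when every element of $(\ZZ/o(x)\ZZ)^\times$ lies in $B_G(x)\cup\tau B_G(x)$. Since $\tau^2=1$, that union is exactly the subgroup $B_G(x)\langle\tau\rangle$, so the condition is equivalent to $\Aut(\langle x\rangle)=B_G(x)\langle\tau\rangle$. The index bound is then immediate, as $\Aut(\langle x\rangle)/B_G(x)$ is a homomorphic image of $\langle\tau\rangle$ and hence has order at most $2$.

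For the second part, rationality of $x$ means $B_G(x)=\Aut(\langle x\rangle)$, which obviously forces inverse semi-rationality; the content is the converse for a $p$-element $x$ with $p\equiv1\bmod 4$. Writing $o(x)=p^n$ (with $n\geq1$, so that $\tau\neq1$), the group $\Aut(\langle x\rangle)\cong(\ZZ/p^n\ZZ)^\times$ is cyclic of order $p^{n-1}(p-1)$, which is divisible by $4$; hence its unique element of order $2$, namely $\tau$, is a square. Now if $x$ is inverse semi-rational, the first part gives either $\tau\in B_G(x)$, whence $B_G(x)=B_G(x)\langle\tau\rangle=\Aut(\langle x\rangle)$, or $[\Aut(\langle x\rangle):B_G(x)]=2$; but in the latter case the square $\tau$ would have trivial image in the order-$2$ quotient $\Aut(\langle x\rangle)/B_G(x)$, i.e. $\tau\in B_G(x)$, a contradiction. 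So $B_G(x)=\Aut(\langle x\rangle)$, and $x$ is rational. There is no genuine obstacle in this argument: the only points demanding care are the coset bookkeeping in the dictionary and, in the second part, the observation that $\tau$ is a square in $(\ZZ/p^n\ZZ)^\times$ exactly when $4\mid p-1$ — precisely what fails for $p\equiv3\bmod4$, where $\tau$ may generate a subgroup of index $2$, so recognising this arithmetic feature as the decisive input is really the crux.
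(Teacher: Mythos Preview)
The paper does not give its own proof of this lemma; it is quoted verbatim from \cite[Lemma~5~(2),~(3)]{CD} and used as a black box. So there is nothing to compare against, and the question reduces to whether your argument is correct --- which it is. Your dictionary between conjugacy of powers and cosets of $B_G(x)$ is set up cleanly, and the deduction of part~(2) from it is immediate. For part~(3), the key arithmetic input --- that $-1$ is a square in the cyclic group $(\ZZ/p^n\ZZ)^\times$ precisely when $4\mid p-1$ --- is exactly the point, and your case split (either $\tau\in B_G(x)$, forcing full index~$1$, or index~$2$, which is incompatible with $\tau$ being a square) is correct and efficient. This is essentially how one would expect the original proof in \cite{CD} to run as well.
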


\section{Solvable Groups}\label{sect:solvable}

It is a classical result of R.~Gow \cite{Gow} that the orders of solvable rational groups are divisible only by the primes $2$, $3$ and $5$. In such results, the treatment of the smallest primes usually requires the most effort; Gow used local Schur indices to show that $7$ cannot divide the order of a solvable rational group. The primes spectra of semi-rational groups were studied by Chillag and Dolfi \cite{CD}. They showed that $\pi(G) \subseteq \{2,3,5,7,13,17\}$ for a solvable semi-rational group $G$. It seems to be still unknown whether $17$ can divide the order of a solvable semi-rational group. For solvable groups $G$ that are inverse semi-rational (i.e.\ \cut), they prove $\pi(G) \subseteq \{2,3,5,7,13\}$, however it remained open whether $13$ can be omitted from the list. In \cite[Theorems 1 and 2]{Maheshwary} Maheshwary shows that $\pi(G) \subseteq \{2, 3, 5, 7\}$ for $G$ a (solvable) \cut\ groups of odd order or a solvable \cut\ groups of even order such that each element has order a prime power (with different techniques than we use here). To prove that this is best possible even for all solvable \cut\ groups, we improve the above mentioned result of Chillag and Dolfi \cite[Theorem 2]{CD} on solvable inverse semi-rational groups, and show that $13$ can be removed from the list.

In the investigation of the prime spectra of classes of solvable groups, the notion of the $k$-eigenvalue property, as introduced in \cite[(2.8) Definition]{FS} by E.~Farias e Soares, has proved to be very useful.

\begin{definition} Let a group $G$ act on a finite dimensional vectorspace $V$ over a finite field $F$. We say that $V$, or the action of $G$ on $V$, has the \emph{$k$-eigenvalue property}, for some $k \in \mathbb{Z}_{> 0}$ provided that (a) $k$ divides $|F^\times|$ and (b) for every $v \in V$, there exists $g \in G$ such that $v^g = \lambda v$, where $\lambda$ is some fixed element of order $k$ in $F^\times$. \end{definition}

A key point in the proof of Theorem \ref{ps_solv_cut} is the following result due to Farias e Soares, so that we will restate it here. By $\zeta_n$ we denote a primitive $n$th root of unity.

\begin{theorem}[{\cite[(2.9) Theorem B (c)]{FS}}]\label{TheoremFeS} Let $H$ be a solvable group acting on a finite-dimensional vector space $V$ over $\FF_p$, where $p \nmid |H|$. Assume that the action has the $k$-eigenvalue property. Let $\varphi$ be a Brauer character afforded by $V$ and $K = \QQ(\varphi)$. If $k = |K(\zeta_p):K|$ and $|K:\QQ| \leq 2$, then $p \in \{2, 3, 5, 7\}$.\end{theorem}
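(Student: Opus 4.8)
The plan is to strip the two hypotheses down to their arithmetic core and then to reduce the general situation to an irreducible one in which that core is transparent. The first thing I would do is compute $[K(\zeta_p):K]$ and observe it is forced to equal $p-1$. Since $p\nmid|H|$, the $\FF_p H$-module $V$ lifts to an ordinary representation, so $\varphi$ is (the restriction of) an ordinary character and $K=\QQ(\varphi)\subseteq\QQ(\zeta_{|H|})$; as $\gcd(|H|,p)=1$ we get $K\cap\QQ(\zeta_p)=\QQ$, and because $\QQ(\zeta_p)/\QQ$ is Galois this yields $[K(\zeta_p):K]=[\QQ(\zeta_p):\QQ]=p-1$. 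Hence the hypothesis $k=[K(\zeta_p):K]$ simply says $k=p-1$, so the fixed eigenvalue $\lambda\in\FF_p^\times$ provided by the eigenvalue property is a \emph{generator} of $\FF_p^\times$. The decisive point is that $\lambda$ lies in the prime field, so it is fixed by the Frobenius $x\mapsto x^p$; equivalently its Teichm\"uller (complex) lift is a primitive $(p-1)$th root of unity $\omega$ fixed by $\sigma_p$ on $\QQ(\zeta_{|H|})$, which therefore enters any Brauer-character value in which it occurs \emph{without being averaged away} by the Frobenius twist.

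I would next settle the case that $V$ is irreducible over $\FF_p$. Writing $E=\operatorname{End}_{\FF_p H}(V)\cong\FF_{p^s}$, the space $V$ is an $E$-module and $H\le\GL_E(V)$; choosing $v_0\neq0$ and $g$ with $gv_0=\lambda v_0$ makes $\lambda$ an $E$-eigenvalue of $g$. When $\dim_E V=1$ (the abelian, Singer-cycle situation) the computation is clean: the eigenvalues of $g$ on $V\otimes\overline{\FF_p}$ are the Frobenius conjugates $\lambda^{p^i}=\lambda$, all equal because $p\equiv1\bmod(p-1)$, so $\varphi(g)=s\,\omega$ and hence $\QQ(\zeta_{p-1})=\QQ(\omega)\subseteq K$. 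With $[K:\QQ]\le2$ this gives $\phi(p-1)\le2$, i.e.\ $p-1\in\{1,2,4,6\}$ and $p\in\{2,3,5,7\}$. The same bookkeeping in general shows that the prime-field eigenvalues of $g$ contribute a term in $\ZZ[\zeta_{p-1}]$ containing $\omega$ with positive rational coefficient $s\cdot(\text{multiplicity})$, while the remaining eigenvalues (whose orders do not divide $p-1$) contribute Gauss-sum terms from the part of $\QQ(\zeta_{|H|})$ that is linearly disjoint from $\QQ(\zeta_{p-1})$.

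For general $V$ I would argue by minimal counterexample, using that the eigenvalue property is inherited by every $H$-submodule (if $gv=\lambda v$ and $V_i$ is $H$-invariant, then $gv_i=\lambda v_i$), together with Clifford theory and the structure theory of solvable completely reducible linear groups (Suprunenko/Huppert): reduce first to an irreducible and then to a primitive module, where $H$ carries its classical normal structure (a self-centralizing normal subgroup of symplectic type over which $V$ is homogeneous), returning the analysis to the $E$-module computation above. The main obstacle is precisely the \emph{separation} step, and it is where solvability is indispensable: under the degree-$\le2$ Galois constraint I must prevent the distinguished $\omega$-contribution from being cancelled by the Gauss-sum part coming from the other eigenvalues. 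Quantitatively, if $\bar S\le(\ZZ/(p-1))^\times$ denotes the image of the stabilizer $S=\{a:\varphi^{\sigma_a}=\varphi\}$ (which has index $\le2$), then $\bar S$ permutes the nonempty set of primitive-$(p-1)$-root eigenvalues of $g$ and acts freely on primitive roots, so a direct counting already forces $\dim V\ge\phi(p-1)/2$; the delicate part is upgrading this to the conclusion that $\sigma_p$ together with $S$ fixes $\omega$, i.e.\ that $\bar S=\{1\}$ and hence $\phi(p-1)\le2$. This is exactly the boundary between $7$ and $11,13$, and I expect it to require, as in Gow's treatment of the prime $7$ for solvable rational groups, a careful use of vanishing-sums-of-roots-of-unity relations within the primitive structure to rule out the cancellation. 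Once $\omega\in K$ is established, $\QQ(\zeta_{p-1})\subseteq K$ and $[K:\QQ]\le2$ close the argument with $p\in\{2,3,5,7\}$.
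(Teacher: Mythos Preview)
The paper does not prove this statement at all: Theorem~\ref{TheoremFeS} is quoted verbatim from Farias e Soares \cite[(2.9) Theorem~B(c)]{FS} and is used as a black box in the proof of Theorem~\ref{ps_solv_cut}. There is therefore no proof in the paper to compare your attempt against.

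As for the attempt itself, it is a plan rather than a proof, and you acknowledge as much. Your opening reduction is correct and useful: since $p\nmid|H|$, the field $K=\QQ(\varphi)$ sits inside $\QQ(\zeta_{|H|})$, hence $K\cap\QQ(\zeta_p)=\QQ$ and the hypothesis $k=[K(\zeta_p):K]$ indeed forces $k=p-1$, so the fixed eigenvalue $\lambda$ generates $\FF_p^\times$. The one-dimensional (over $E$) case is also handled cleanly and already isolates the mechanism: an element acting as the prime-field scalar $\lambda$ gives $\varphi(g)=s\omega$ with $\omega$ a primitive $(p-1)$th root of unity, whence $\QQ(\zeta_{p-1})\subseteq K$ and $\phi(p-1)\le 2$.

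The genuine gap is exactly where you locate it: in the general (primitive, non-abelian) case you must show that the contribution of the prime-field eigenvalue $\lambda$ to $\varphi(g)$ cannot be cancelled by the other eigenvalues, so that $\omega$ still lies in $K$. You say you ``expect'' this to go through via vanishing-sum relations and the structure theory of primitive solvable linear groups, but you do not carry it out; this is the substantive content of Farias e Soares's argument and is not a routine step. Without it the proposal does not establish the theorem. If your aim is only to reproduce what the present paper needs, note that the paper applies the theorem solely with $p=13$ and $k=12$; a self-contained argument targeted at that single case might be more tractable than the full result.
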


\begin{proof}[Proof of Theorem \ref{ps_solv_cut}] By \cite[Theorem 2]{CD} we have $\pi(G) \subseteq \{2, 3, 5, 7, 13 \}$ for solvable \cut\ groups $G$. Assume that $G$ is a solvable \cut\ group of minimal order such that $13 \in \pi(G)$. Let\[1 = G_{n+1} \unlhd G_{n} \unlhd ...  \unlhd G_{1} \unlhd G_0 = G\] be a  chief series of $G$. Note that each factor group $G/G_j$ is again a \cut\ group. Hence $G_n$ is the only chief factor of $G$ divisible by $13$ and, as a minimal normal subgroup, elementary abelian. Thus $V = G_n \in \Syl_{13}(G)$ and  $G = V \rtimes_\alpha H$, where $H \simeq G/V$ is also a \cut\ group of order not divisible by $13$. Now, the group homomorphism $\alpha \colon H \to \Aut(V) \simeq \GL(d, 13)$ turns $V$ into an $\FF_{13}H$-module, which is irreducible as $V$ is a minimal normal subgroup of $G$. We will write $V$ additively. Let $\varphi$ be the Brauer character of $H$ afforded by $V$. Then $\varphi$ is actually an ordinary character as $13 \nmid |H|$. By \cite[Lemma 3]{Tent} and the remark following it, the field of character values of $\varphi$ is contained in the field of character values of each of its irreducible constituents. But then $[\QQ(\varphi):\QQ] \leq 2$ by Proposition \ref{RitterSehgal} (1) $\Rightarrow$ (2). All elements of order $13$ in $G$ are rational in $G$ by Lemma \ref{lemma_CD}, so for $x \in V$ we have that $j\cdot x \sim x$ for each $j \in \mathbb{Z}$ with $(j, 13) = 1$, hence there exists $h \in H$ such that $x^h = j\cdot x$. In particular, the $H$-module $V$ has the $12$-eigenvalue property. But then we are in the situation of Theorem \ref{TheoremFeS}: $H$ is solvable, $K = \QQ(\varphi)$ has degree at most $2$ over $\QQ$ and $k = [K(\zeta_{13}): K] = 12$, as $K \cap \QQ(\zeta_{13}) = \QQ$ (the unique quadratic subfield of $\QQ(\zeta_{13})$ is real) and $H$ (and hence $G$) cannot exist. \end{proof}

\begin{remark} By looking a bit closer at the group structures, one can see that a set $S$ is the prime spectrum of a solvable \cut-group if and only if $S$ is one of the following sets: \begin{align*} \{2\},\ \{3\},\ \{2, 3\},\ \{2, 5\},\ \{3, 7\},\ \{2, 3, 5\},\ \{2, 3, 7\},\  \{2, 3, 5, 7\} . \end{align*}
By the bound on the prime spectrum in Theorem \ref{ps_solv_cut}, the description of \cut-groups of odd order in \cite[Theorem 3]{CD} and the examples in \cite[Theorem 5]{BMP} it only remains to consider $\{2, 7\}$, $\{2, 5, 7\}$ and $\{2, 3, 5, 7\}$. For the first two this can be done by purely arithmetic conditions: note that the generators of a cyclic subgroup of order $7$ decompose into one or two orbits under the conjugation action. But this means that the group has a section of order $6$ or $3$ which is impossible. The solvable \cut-groups $(C_5^2 \rtimes Q_8) \times (C_7 \rtimes C_3)$ and $S_3 \times (C_5^2 \rtimes Q_8) \times (C_7^2 \rtimes \SL(2,3))$ both have prime spectrum $\{2, 3, 5, 7\}$. \end{remark}

\section{Frobenius Groups}\label{sect:Frobenius}

Frobenius groups that are rational have been determined in \cite{DS} (there are two infinite series together with ``Markel's group''), and the structure of semi-rational Frobenius groups has been studied in \cite{ADD}, though a complete classification has not been obtained. We will now prove the classification of \cut\ Frobenius groups given in Theorem \ref{Frobenius_cut} in the introduction. To this end we will first show that there are only eight isomorphism types of potential Frobenius complements in \cut\ Frobenius groups (Proposition \ref{FrobeniusComplements}) and use this to determine the structure of all \cut\ Frobenius groups. The general structure of Frobenius groups is fairly well understood, standard references include \cite{Huppert1,Passman,ScottGT}.

In the following proposition we strengthen the conditions of \cite[Lemma~2.5]{ADD} for \cut\ groups.  Note that we can in particular prove that \cut\ Frobenius groups are solvable, which is not the case for semi-rational Frobenius groups (e.g.\ the Frobenius group $C_{11}^2 \rtimes \SL(2,5)$ is indeed semi-rational but not \cut). Small parts of the proofs are similar to those in \cite{ADD}, however we include the complete proofs to make the exposition self-contained.

\begin{proposition}\label{SylowSubgroupsComplement} Let $G$ be a \cut\ Frobenius group. Then $G$ is solvable. If $K$ is a Frobenius complement of $G$, then the Sylow $p$-subgroups of $K$ are cyclic of order at most $p$ if $p$ is odd and cyclic of order at most $4$ or quaternion of order $8$ for $p = 2$. \end{proposition}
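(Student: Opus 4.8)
The plan is to work with the quotient $K \simeq G/F$, where $F$ is the Frobenius kernel of $G$. Since quotients of \cut\ groups are \cut, $K$ is itself a \cut\ group, and being a Frobenius complement its Sylow subgroups are cyclic at odd primes and cyclic or generalized quaternion at $2$. As $F$ is nilpotent (Thompson), $G$ is solvable once $K$ is, so everything reduces to proving, for the \cut\ Frobenius complement $K$: its Sylow $p$-subgroups for odd $p$ have order at most $p$; its Sylow $2$-subgroup is cyclic of order at most $4$ or isomorphic to $Q_8$; and $K$ is solvable. The workhorse is Lemma \ref{lemma_CD}, applied to generators of (subgroups of) Sylow subgroups of $K$.

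Let $p$ be odd and $P = \langle x \rangle \in \Syl_p(K)$ with $|P| = p^a$. Because $P$ is abelian it is a Sylow $p$-subgroup of $\N_K(P)$ and $P \le \C_K(P)$, so $|\N_K(P)|_p = |P| = |\C_K(P)|_p$ and hence $B_K(x) = \N_K(P)/\C_K(P)$ has order prime to $p$; as a subgroup of the cyclic group $\Aut(\langle x \rangle) \simeq C_{p^{a-1}(p-1)}$ it therefore has order dividing $p-1$. Since $x$ is inverse semi-rational in $K$, Lemma \ref{lemma_CD}(2) gives $[\Aut(\langle x \rangle) : B_K(x)] \le 2$, and combined with the previous bound this forces $p^{a-1} \le 2$, hence $a = 1$. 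The same reasoning handles a cyclic Sylow $2$-subgroup $P = \langle x \rangle$ of order $2^a$: now $\Aut(\langle x \rangle)$ is a $2$-group while $B_K(x)$ has odd order, so $B_K(x) = 1$, and then inverse semi-rationality forces $|\Aut(\langle x \rangle)| = \phi(2^a) \le 2$, i.e.\ $a \le 2$.

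It remains to treat a generalized quaternion Sylow $2$-subgroup $P \simeq Q_{2^n}$ with $n \ge 3$; here I would apply Lemma \ref{lemma_CD}(2) to the cyclic maximal subgroup $\langle a \rangle \le P$ of order $2^{n-1}$. An element of $P \setminus \langle a \rangle$ inverts $a$, so the inversion automorphism lies in $B_K(a)$; hence inverse semi-rationality of $a$ forces $B_K(a) = \Aut(\langle a \rangle)$, which is a $2$-group of order $\phi(2^{n-1}) = 2^{n-2}$. On the other hand $\langle a \rangle \trianglelefteq P$, so $P$ is a Sylow $2$-subgroup of $\N_K(\langle a \rangle)$ and its image is a Sylow $2$-subgroup of $B_K(a) = \N_K(\langle a \rangle)/\C_K(\langle a \rangle)$, of order $[P : \C_P(\langle a \rangle)] = [P : \langle a \rangle] = 2$. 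Therefore $2^{n-2} = 2$, i.e.\ $n = 3$ and $P \simeq Q_8$.

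For solvability, the bounds just obtained give $|K|_2 \le 8$. If $K$ were non-solvable, then by Zassenhaus's classification of non-solvable Frobenius complements $K$ has a normal subgroup $K_0$ of index at most $2$ with $K_0 \simeq \SL(2,5) \times M$, where $|M|$ is coprime to $30$. Then $|K_0|_2 = |\SL(2,5)|_2 = 8 = |K|_2$ forces $K = K_0 = \SL(2,5) \times M$, so $M \trianglelefteq K$ and $K/M \simeq \SL(2,5)$ would be \cut; but a $5$-element of $\SL(2,5)$ lies in one of two distinct conjugacy classes of such elements, so it is not rational and hence, by Lemma \ref{lemma_CD}(3), not inverse semi-rational --- a contradiction. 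Thus $K$, and with it $G$, is solvable. I expect this last step to be the real obstacle: the Sylow bounds are essentially dictated by Lemma \ref{lemma_CD}, whereas ruling out non-solvable complements needs the precise structure of such complements together with the observation that the $2$-part bound $|K|_2 \le 8$ is exactly what collapses that structure down to $\SL(2,5) \times M$, where the \cut\ obstruction at the prime $5$ can be applied.
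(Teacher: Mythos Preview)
Your proof is correct. The treatment of the Sylow subgroups of $K$ is essentially the paper's argument, with one small streamlining: for a cyclic Sylow $2$-subgroup you argue directly that $B_K(x)$ has odd order and hence is trivial inside the $2$-group $\Aut(\langle x\rangle)$, whereas the paper invokes Cayley's normal $2$-complement theorem to exhibit $P$ as a quotient of $K$ and then uses that cyclic \cut\ $2$-groups have order at most $4$. Both routes are short; yours avoids the detour through the quotient.

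The genuine difference is in the solvability step. The paper proves solvability \emph{first}: from Zassenhaus it passes, via \cite[Theorem~11.4.10]{JdR1}, to a quotient of $G$ isomorphic to $\SL(2,5)$ or to the covering group of $S_5$ with \textsf{GAP} ID \texttt{[240,89]}, and rules both out by exhibiting real quadratic character fields. You instead prove the Sylow bounds first and then feed the consequence $|K|_2 \le 8$ back into Zassenhaus: since $|\SL(2,5)\times M|_2 = 8$, the index-$2$ overgroup is impossible, so $K = \SL(2,5)\times M$ and $\SL(2,5)$ itself is a \cut\ quotient, which you kill with Lemma~\ref{lemma_CD}(3) at the prime $5$. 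Your route is more self-contained --- it uses only the lemmas already at hand and avoids both the external citation and the second candidate group --- at the cost of reordering the proof so that the Sylow analysis precedes solvability. Either ordering is fine here because your Sylow arguments nowhere use that $K$ is solvable.
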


\begin{proof} Assume by way of contradiction that the \cut\ Frobenius group $G$ is not solvable. Then a Frobenius complement $K$ of $G$ is not solvable and, by a theorem of Zassenhaus \cite[18.6.]{Passman}, $K$ has a normal subgroup of index at most $2$ isomorphic to $\SL(2, 5) \times M$, where $M$ is of order coprime to $30$. By \cite[Theorem~11.4.10]{JdR1} $G$ has a quotient isomorphic to $\SL(2, 5)$ or to the covering group of $S_5$ having SmallGroup ID \texttt{[240, 89]} in \textsf{GAP} \cite{GAP4}. It is readily checked that the last mentioned group has fields of character values isomorphic to $\QQ(\sqrt{2})$ and $\QQ(\sqrt{3})$ and $\SL(2, 5)$ has a character field $\QQ(\sqrt{5})$, so these groups cannot be \cut\ groups. As one of the two necessarily appears as a quotient of $G$, $G$ is also not \cut, implying that \cut\ Frobenius groups are necessarily solvable.

Let $p$ be an odd prime and $P \in \Syl_p(K)$ with $P \not= 1$. Then $P = \langle x \rangle \simeq C_{p^f}$ by \cite[18.1]{Passman} and $|\Aut(P)| = (p-1)p^{f-1}$. As $P$ is abelian, $p \nmid |B_K(x)|$. But then $[\Aut(P) : B_K(x)] \leq 2$ (see Lemma \ref{lemma_CD}) implies that $f \leq 1$.
 
For $p = 2$ the Sylow $2$-subgroups of $K$ are cyclic or generalized quaternion by \cite[18.1]{Passman}. If $P \in \Syl_2(K)$ is cyclic, then $K$ has a normal $2$-complement by Cayley's normal $2$-complement theorem, i.e.\ $K \simeq M \rtimes P$ with $M$ a group of odd order. Then $P$ is a cyclic $2$-group that is a \cut\ group as a quotient of a \cut\ group and hence of order at most $4$. Now assume that
\[ P = \langle\ x, y\ |\ x^{2^{n-1}} = 1,\ y^2 = x^{2^{n-2}},\ x^y = x^{-1}\ \rangle \]
is a (generalized) quaternion Sylow $2$-subgroup of $K$ of order $2^n$, $n \geq 3$. Then $\langle x \rangle \unlhd P$ is a cyclic normal subgroup of order $2^{n-1}$. Clearly $B_P(x) = \langle \tau \rangle$, where $\tau$ denotes the inversion-automorphism of $\langle x \rangle$. But as $P \in \Syl_2(K)$, the $2$-part of the order of $B_K(x)$ and $B_P(x)$ coincide. As $|\Aut(\langle x \rangle)| = \phi(2^{n-1}) = 2^{n-2}$, a power of $2$, we have  by Lemma \ref{lemma_CD}, that $x$ is inverse semi-rational in $K$ if and only if $2^{n-2} = 2$, hence $n = 3$ and $P$ is the quaternion group of order $8$.
\end{proof}

From the fact that the Frobenius complement $K$ is a solvable \cut\ group, we infer with Theorem \ref{ps_solv_cut} that $\pi(K) \subseteq \{2, 3, 5, 7\}$, so in particular there are only finitely many isomorphism types of Frobenius complements in Frobenius groups that are \cut.

\begin{proposition}\label{FrobeniusComplements} If $K$ is a Frobenius complement that is \cut, then $K$ is isomorphic to one of the following groups: \[ C_2,\quad C_3,\quad C_4,\quad C_6,\quad Q_8,\quad C_3 \rtimes C_4,\quad \SL(2, 3),\quad Q_8 \times C_3. \] 
\end{proposition}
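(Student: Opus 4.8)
The plan is to combine the structural information already at hand with the classical theory of Frobenius complements: first I would cut the prime spectrum down from $\{2,3,5,7\}$ to $\{2,3\}$, and then enumerate the finitely many remaining possibilities. By Proposition~\ref{SylowSubgroupsComplement}, $K$ is a solvable \cut\ group whose Sylow $2$-subgroup is cyclic of order dividing $4$ or quaternion of order $8$, and whose Sylow $p$-subgroup is cyclic of order dividing $p$ for each odd prime $p$; by Theorem~\ref{ps_solv_cut}, $\pi(K)\subseteq\{2,3,5,7\}$. To see that $5\nmid|K|$, suppose $x\in K$ has order $5$. Since $5\equiv 1\bmod 4$ and $K$ is \cut, Lemma~\ref{lemma_CD} gives that $x$ is rational in $K$, so $\Aut(\langle x\rangle)=B_K(x)\cong C_4$; choose $t\in\N_K(\langle x\rangle)$ whose image generates $B_K(x)$, so $4\mid o(t)$. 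A Frobenius complement of even order has a unique involution $z$, which is therefore central in $K$; as $\langle t\rangle$ is cyclic it contains $z=t^{o(t)/2}$. But $z$ centralises $x$, so $z$ maps to $1$ in $B_K(x)\cong C_4$ while the image of $t$ has order $4$; hence $4\mid o(t)/2$, i.e.\ $8\mid o(t)$, forcing $K$ to contain an element of order $8$, which contradicts the Sylow structure.

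Next I would show $7\nmid|K|$. If $x\in K$ has order $7$, then $K$ being \cut\ and Lemma~\ref{lemma_CD} give $\Aut(\langle x\rangle)=B_K(x)\langle\tau\rangle$, so $[\Aut(\langle x\rangle):B_K(x)]\leq 2$ and hence $3\mid|B_K(x)|$. Taking a generator $t$ of a Sylow $3$-subgroup of $\N_K(\langle x\rangle)$ mapping onto an order-$3$ element of $B_K(x)$ (such a Sylow $3$-subgroup is cyclic of order $3$ by Proposition~\ref{SylowSubgroupsComplement}), the subgroup $\langle x,t\rangle\leq K$ is the non-abelian group of order $21$. This contradicts the classical fact (see \cite{Passman}) that every subgroup of order $pq$ ($p,q$ primes) of a Frobenius complement is cyclic. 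Therefore $\pi(K)\subseteq\{2,3\}$, and by the Sylow bounds $|K|\in\{1,2,3,4,6,8,12,24\}$.

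It remains to determine, for each such order, the groups that are simultaneously Frobenius complements and \cut, which I would organise by the isomorphism types of the Sylow $2$-subgroup $S\in\{1,C_2,C_4,Q_8\}$ and the Sylow $3$-subgroup $T\in\{1,C_3\}$ of $K$. If $T=1$ then $K\in\{C_2,C_4,Q_8\}$, since $C_8$, though a Frobenius complement, fails to be \cut\ as $\Aut(C_8)\cong C_2\times C_2\neq\langle\tau\rangle$; if $S=1$ then $K=C_3$. For $|S|=2,|T|=3$ the order is $6$ and the only Frobenius complement is $C_6$, because $S_3$ has three involutions. For $|S|=4,|T|=3$ the only two groups of order $12$ with cyclic Sylow $2$-subgroup are $C_{12}$ and the dicyclic group $C_3\rtimes C_4$; the first is not \cut\ ($\Aut(C_{12})\cong C_2\times C_2\neq\langle\tau\rangle$), while $C_3\rtimes C_4$ is a \cut\ Frobenius complement. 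For $S=Q_8,|T|=3$ the three groups of order $24$ with quaternion Sylow $2$-subgroup are $\SL(2,3)$, $Q_8\times C_3$ and the dicyclic group $C_3\rtimes Q_8$; the last fails to be \cut\ (a generator of its cyclic subgroup of order $12$ is not inverse semi-rational, since $B_{K}(\cdot)\langle\tau\rangle=\langle\tau\rangle\neq\Aut(C_{12})$), while $\SL(2,3)$ and $Q_8\times C_3$ are \cut. Collecting the survivors yields exactly the eight groups listed.

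I expect the real work to lie in the last step: one must confirm that the list of Frobenius complements of each of these orders is complete (in particular identifying the three quaternion-Sylow groups of order $24$), and then carry out the element-by-element verification via Lemma~\ref{lemma_CD} that $\SL(2,3)$ and $Q_8\times C_3$ are \cut\ — notably, that the order-$12$ elements of $Q_8\times C_3$ are inverse semi-rational even though no element of the group induces the full inversion on them (here $B_K(\cdot)$ and $\tau$ together generate all of $\Aut(C_{12})$). All of this is elementary, and over the finite list of candidate orders it can, if desired, be double-checked with a short \textsf{GAP} computation \cite{GAP4}.
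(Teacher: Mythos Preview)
Your proof is correct and takes a genuinely different route from the paper's. The paper keeps the full prime spectrum $\{2,3,5,7\}$ and the Sylow restrictions from Proposition~\ref{SylowSubgroupsComplement}, then filters the resulting (finite) list of \cut\ candidates by two structural facts about Frobenius complements --- the unique central involution in even order, and the non-existence of Frobenius subgroups --- with the final sieve carried out essentially as a \textsf{GAP} search; the footnote lists the seven groups (such as $C_5\rtimes C_4$, $C_7\rtimes C_3$, $C_{15}\rtimes C_4$) that are knocked out at that stage. You instead eliminate the primes $5$ and $7$ at the outset by tailored arguments: for $5$ you combine rationality of $5$-elements with the unique-involution property to force an element of $2$-order at least $8$, contradicting the Sylow bound; for $7$ you produce a non-cyclic subgroup of order $21$, contradicting the classical fact that $pq$-subgroups of Frobenius complements are cyclic. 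This reduces everything to $\{2,3\}$-groups of order at most $24$, which you then handle case by case.

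What each approach buys: the paper's argument is shorter to state and leans on a single finite check, at the cost of being somewhat opaque (one must trust the enumeration). Your argument is more transparent and entirely by hand; in particular, your exclusion of $5$ and $7$ explains \emph{why} no \cut\ Frobenius complement can involve these primes, rather than merely observing it from a list. The residual verifications you flag in the last paragraph --- identifying the three order-$24$ groups with quaternion Sylow $2$-subgroup and checking that the order-$12$ elements of $Q_8\times C_3$ are inverse semi-rational via $B_K(\cdot)\langle\tau\rangle=\Aut(C_{12})$ --- are routine and correctly described.
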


\begin{proof} To filter the correct complements, we use $\pi(K) \subseteq \{2, 3, 5, 7\}$, the structure of the Sylow subgroups obtained in Proposition \ref{SylowSubgroupsComplement} together with the following additional results on Frobenius groups which eliminate another $7$ possibilities\footnote{Namely $S_3$, $C_5 \rtimes C_4$ with faithful action, $C_7 \rtimes C_3$, $C_7 \rtimes C_6$ with faithful action, $C_2 \times (C_7 \rtimes C_3)$, $C_{15} \rtimes C_4$ with faithful action and $C_2 \times (C_7 \rtimes C_3)$.}:
\begin{enumerate}
 \item If the Frobenius complement is of even order, then it has a unique (hence central) involution \cite[Theorem~18.1]{Passman}.
 \item A Frobenius complement cannot contain a subgroup which is itself a Frobenius group \cite[12.6.11]{ScottGT}.
\end{enumerate} 

This can be done by working out the possible structures of such groups or by a search in \textsf{GAP} of the groups meeting these requirements. By the bound of the order obtained by the structures of the Sylow subgroups this is a finite problem.
\end{proof}

In a large part of the proof of Theorem \ref{Frobenius_cut} we will have to deal with Frobenius groups $G$ that have an elementary abelian $p$-group as Frobenius kernel $F$. If, in this situation, $K$ denotes a Frobenius complement of $G$, then we can consider $F$ as an $\FF_p K$-module, which is semi-simple. Hence $F \simeq V_1 \oplus ... \oplus V_s$ for irreducible $\FF_p K$-modules $V_j$, which are necessarily faithful. Each $V_j$ then corresponds to an ordinary character of $K$. If $V$ is an $\FF_p K$-module, such that $V \rtimes K$ is a Frobenius group with complement $K$, then we can define a new group (a subdirect product) by taking $V^s \rtimes K$ with ``diagonal action'', i.e.\ $K$ acts on all $s$ copies of $V$ in the same way, that is if $(x_1, ..., x_s) \in V^s$, then $(x_1, ..., x_s)^y = (x_1^y, ..., x_s^y)$ for $y \in K$ (and $x_j^y$ is defined in $V \rtimes K$). Clearly, this is again a Frobenius group.


\begin{definition} Let $V$ be a $\FF_p K$-module such that $G = V \rtimes K$ is a Frobenius group. Let $\beta \colon K \to \GL_{\FF_p}(V)$ be the corresponding homomorphism. For $x \in V$ denote by $Z_G(x)$ those automorphisms of $\langle x \rangle$ that are induced by elements of $\beta(K) \cap \Z(\GL_{\FF_p}(V))$.
\end{definition}

Thus the elements of $Z_G(x)$ are exactly those automorphisms of $\langle x \rangle$ induced by those multiples of the identity of $V$ that come from elements of $K$. Actually, $Z_G(x)$ is equal for each $x \in V \setminus \{0\}$. Note that if in the previous definition the $\FF_p K$-module $V$ is absolutely irreducible, then $Z_G(x)$ are those automorphisms of $\langle x \rangle$ that are induced by elements of $\beta(\Z(K))$. For later use we record the following observation.

\begin{lemma}\label{copies_of_cut_groups} Let $K$ be a group, let $p$ an odd prime and let $V$ be an $\FF_pK$-module such that $G = V \rtimes K$ is a \cut\ Frobenius group. Then the following are equivalent:
\begin{enumerate}
 \item\label{lemma:Vs} $V^s \rtimes K$ (with diagonal action) for all $s \geq 2$ is a \cut\ Frobenius group.
 \item\label{lemma:V2} $V^2 \rtimes K$ (with diagonal action) is a \cut\ Frobenius group.
 \item\label{lemma:Z_G} $Z_G(x)\langle\tau \rangle = \Aut(\langle x \rangle)$ for each $x \in V$, where $\tau$ is the inversion automorphism of $\langle x \rangle$.
\end{enumerate}
\end{lemma}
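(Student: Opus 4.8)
The plan is to recast everything as a statement about the subspaces of $V$, exploiting that $K$ acts fixed-point-freely on $V$ (as $V\rtimes K$ is a Frobenius group with complement $K$), hence also on every $V^{s}$ with the diagonal action, so that each $V^{s}\rtimes K$ is again a Frobenius group with kernel $V^{s}$ and complement $K$. For a nonzero subspace $W\leq V$ I would set $N_{W}=\{k\in K:\beta(k)|_{W}\text{ is a scalar matrix}\}$: this is a subgroup of $K$, the map sending $k\in N_{W}$ to its scalar is a homomorphism $N_{W}\to\FF_{p}^{\times}$, and it is injective because a $k$ acting trivially on the nonzero subspace $W$ would fix a nonzero vector. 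Thus $N_{W}$ is cyclic with image $S(W)\leq\FF_{p}^{\times}$, and $S(W')\subseteq S(W)$ whenever $W\subseteq W'$. The basic computation I need is: for $0\neq w=(v_{1},\dots,v_{s})\in V^{s}$ one has $\langle w\rangle\cong C_{p}$, while fixed-point-freeness gives $\C_{V^{s}\rtimes K}(\langle w\rangle)=V^{s}$ and $\N_{V^{s}\rtimes K}(\langle w\rangle)=V^{s}\rtimes N_{W}$ with $W=\langle v_{1},\dots,v_{s}\rangle$, so that $B_{V^{s}\rtimes K}(w)$ is identified, as a subgroup of $\Aut(\langle w\rangle)\cong\FF_{p}^{\times}$, with $S(W)$; by Lemma~\ref{lemma_CD} the element $w$ is inverse semi-rational in $V^{s}\rtimes K$ precisely when $S(W)\langle\tau\rangle=\FF_{p}^{\times}$, where $\tau$ is inversion, i.e.\ $-1\in\FF_{p}^{\times}$. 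For $s=1$ this yields $B_{G}(x)$ identified with $S(\langle x\rangle)$, and by definition $Z_{G}(x)$ is identified with $S(V)$.

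Next I would reduce the question of whether $V^{s}\rtimes K$ is \cut\ to a condition inside $V$. Every element of the Frobenius group $V^{s}\rtimes K$ is conjugate into $V^{s}$ or into $K$, and an element of $K$ is inverse semi-rational in $V^{s}\rtimes K$ if and only if it is so in $K$, since in a Frobenius group the fusion of complement elements is controlled by the complement (complements are self-normalizing, and distinct conjugates of the complement meet trivially). As $G=V\rtimes K$ is \cut, every element of $K$ is already inverse semi-rational in $K$; hence $V^{s}\rtimes K$ is \cut\ if and only if $S(W)\langle\tau\rangle=\FF_{p}^{\times}$ for every subspace $W\leq V$ with $1\leq\dim W\leq\min(s,\dim V)$. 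In particular, the hypothesis that $G$ is \cut\ already supplies this for all lines $W$.

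Two of the implications are now immediate: $(\ref{lemma:Vs})\Rightarrow(\ref{lemma:V2})$ is trivial, and $(\ref{lemma:Z_G})\Rightarrow(\ref{lemma:Vs})$ follows from monotonicity, because $S(V)\subseteq S(W)$ for every $W$, so $S(V)\langle\tau\rangle=\FF_{p}^{\times}$ forces $S(W)\langle\tau\rangle=\FF_{p}^{\times}$ for every $W$, and hence $V^{s}\rtimes K$ is \cut\ for all $s$. The substance of the lemma, and the step I expect to be the main obstacle, is $(\ref{lemma:V2})\Rightarrow(\ref{lemma:Z_G})$: from $(\ref{lemma:V2})$ the dictionary above gives $S(W)\langle\tau\rangle=\FF_{p}^{\times}$ only for $\dim W\leq 2$, and this has to be propagated all the way up to $W=V$.

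For this I would prove a gluing property: if $W=W_{1}+W_{2}$ with $W_{1}\cap W_{2}\neq 0$, then $S(W)=S(W_{1})\cap S(W_{2})$. The inclusion $S(W)\subseteq S(W_{1})\cap S(W_{2})$ and the equality $N_{W}=N_{W_{1}}\cap N_{W_{2}}$ are elementary, since a matrix that is scalar on $W_{1}$ and on $W_{2}$ has the same scalar on any common nonzero vector and hence is scalar on $W_{1}+W_{2}$; the reverse inclusion then follows by counting, using that $N_{W_{1}}$ and $N_{W_{2}}$ are subgroups of the cyclic group $N_{W_{1}\cap W_{2}}$, so $|S(W)|=|N_{W_{1}}\cap N_{W_{2}}|=\gcd(|N_{W_{1}}|,|N_{W_{2}}|)=|S(W_{1})\cap S(W_{2})|$ (the last equality inside the cyclic group $\FF_{p}^{\times}$). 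I would pair this with the arithmetic fact that for subgroups $A,B\leq\FF_{p}^{\times}$, if $A\langle\tau\rangle=B\langle\tau\rangle=\FF_{p}^{\times}$ then $(A\cap B)\langle\tau\rangle=\FF_{p}^{\times}$: each of $A,B$ is then either $\FF_{p}^{\times}$ or its unique index-$2$ subgroup, the latter only possible when $p\equiv 3\bmod 4$, and one checks all the cases at once. An induction on $\dim W$ now finishes the argument: for $W$ of dimension $m\geq 3$ write $W=W_{1}+W_{2}$ with $\dim W_{1}=m-1$, $\dim W_{2}=2$ and $\dim(W_{1}\cap W_{2})=1$; the inductive hypothesis (applied to $W_{1}$) and the case $\dim W=2$ (applied to $W_{2}$) give $S(W_{i})\langle\tau\rangle=\FF_{p}^{\times}$, whence $S(W)\langle\tau\rangle=\FF_{p}^{\times}$ by gluing and the arithmetic fact. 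Taking $W=V$ gives $S(V)\langle\tau\rangle=\FF_{p}^{\times}$, that is $Z_{G}(x)\langle\tau\rangle=\Aut(\langle x\rangle)$, which is $(\ref{lemma:Z_G})$. In low dimension the induction is empty: if $\dim V=1$ all three statements collapse directly to ``$G$ is \cut'', and if $\dim V=2$ the case $W=V$ is itself a base case, so no gluing is needed.
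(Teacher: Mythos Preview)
Your argument is correct, but it takes a rather different route from the paper for the key implication $(\ref{lemma:V2})\Rightarrow(\ref{lemma:Z_G})$. The paper's proof is a one-step direct computation: fix a nonzero $x\in V$ and a scalar $\lambda$ in the appropriate range (all of $\FF_p^\times$ when $p\equiv 1\bmod 4$, the squares when $p\equiv 3\bmod 4$); since $G$ is \cut\ there is a \emph{unique} $k\in K$ with $x^k=\lambda x$, uniqueness coming from fixed-point-freeness. For an arbitrary $y\in V$, the \cut\ property of $V^2\rtimes K$ applied to $(x,y)$ produces an $\ell\in K$ with $(x,y)^\ell=\lambda(x,y)$; comparing first coordinates forces $\ell=k$, and then the second coordinate gives $y^k=\lambda y$. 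Thus the single element $k$ acts as the scalar $\lambda$ on all of $V$, and $(\ref{lemma:Z_G})$ follows immediately. No induction, no gluing, no case analysis on subspaces is needed.

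Your approach instead builds a lattice-theoretic framework: the map $W\mapsto S(W)$, the gluing identity $S(W_1+W_2)=S(W_1)\cap S(W_2)$ for overlapping subspaces, the arithmetic closure of the condition $A\langle\tau\rangle=\FF_p^\times$ under intersections, and finally an induction on $\dim W$. This is more machinery than the lemma requires, but it is sound, it treats both residues of $p$ modulo $4$ uniformly, and the dictionary $B_{V^s\rtimes K}(w)\cong S(\langle v_1,\dots,v_s\rangle)$ is a clean way to see why only $s=2$ matters. The paper's argument is shorter because it exploits uniqueness directly rather than passing through the subspace lattice; your argument, by contrast, makes the monotonicity $S(V)\subseteq S(W)$ explicit and would generalise more readily if one wanted analogous statements for other families of modules.
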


\begin{proof} We will only give a proof in case $p \equiv 1 \bmod 4$, the proof in the other case is similar. Hence an element $x$ of order $p$ is inverse semi-rational in a group $X$ if and only if $B_X(x) = \Aut(\langle x \rangle)$, by Lemma \ref{lemma_CD}.

Clearly, \eqref{lemma:Vs} implies \eqref{lemma:V2}. Now assume that \eqref{lemma:V2} holds and we want to show \eqref{lemma:Z_G}.
Hence assume that $H = V^2 \rtimes K$ is a \cut\ Frobenius group. We have to show that all automorphisms of $\langle x \rangle$ for $x \in V \leq G$ are induced by scalar multiples of the identity that come from an element of $K$. Let $x \in V \setminus \{ 0 \}$ and $\lambda \in \FF_p^\times$. Then there is a unique $k \in K$ such that $x^k = \lambda x$, because $G$ is a \cut\ Frobenius group. Now for any $y \in V$, there is an $\ell \in K$ such that $(x, y) \in V^2 \leq H$ gets conjugate by $\ell$ to $\lambda (v, w)$, as $V^2 \rtimes K$ is a \cut\ group. Now as the action is diagonal, $(x^\ell, y^\ell) = (x, y)^\ell = \lambda(x, y) = (\lambda x, \lambda y) = (x^k, \lambda y)$. By uniqueness, $k = \ell$ and the action of $k$ multiplies all elements of $V$ by the same scalar $\lambda$, hence multiplication by $\lambda$ is an element of $Z_G(x)$ for every $x \in V$.

For the implication \eqref{lemma:Z_G} $\Rightarrow$ \eqref{lemma:Vs}, assume that $Z_G(x) = \Aut(\langle x \rangle)$ for each $x \in V$ and set $L = V^s \rtimes K$ for some $s \geq 2$ with diagonal action. Then $L$ is a Frobenius group. For each $y \in V^s$, $B_L(y) = \Aut(\langle y \rangle )$ and $y$ is (inverse semi-) rational in $L$. As $L$ is the union of conjugates of the Frobenius complement $K$ (which is \cut\ as a quotient of the \cut\ group $G$) and the Frobenius kernel $V^s$, it follows that $L$ is a \cut\ group. \end{proof}

\begin{proof}[Proof of Theorem \ref{Frobenius_cut}] Let $G$ always be a Frobenius group with Frobenius kernel $F$ and Frobenius complement $K$.
\begin{enumerate}
 \item If $K$ has even order, then the Frobenius kernel $F$ is abelian \cite[V, 8.18]{Huppert1}. Let $x \in F \setminus \{1\}$. Then $\C_G(x) = F$ and hence $B_G(x) = \N_G(\langle x \rangle)/\C_G(\langle x \rangle)$ is isomorphic to a subgroup of $K$. Note that if $x \in F$ is an element of order $r^n$ for an odd prime $r$, then we need that $\phi(r^n)/2 = \frac{(r-1)r^{n-1}}{2}$ divides $|K|$, by Lemma \ref{lemma_CD}; if in this situation the $2$-part of $(r-1)$ is at least $4$, then we even have that $\phi(r^n)$ divides $|K|$, as $\Aut(C_{r^n})$ is cyclic. From this it follows that $n \leq 1$, i.e.\ all Sylow subgroups of $F$ are elementary abelian.

 Going through the possibilities for even order Frobenius complements in \cut\ groups given in Proposition \ref{FrobeniusComplements}, we obtain the candidates for Frobenius kernels stated in Table \ref{table_pos_frob_kernels} sorted according to their Frobenius complements. 
 
 \begin{table}[h!]\caption{Possible isomorphism types of Frobenius complements $K$ together with their Frobenius kernels $F$ of \cut\ Frobenius groups.}
 \label{table_pos_frob_kernels}
 \begin{center}
\begin{tabular}{ll}
 \hline
  $K$ \hspace{2.2cm} & Candidates for $F$  \\ \hline\hline 
  $C_2$ & $C_3^b$  \\
  $C_4$ & $C_3^b$, \quad $C_5^c$, \quad $C_3^b \times C_5^c$ \\
  $C_6$ & $C_7^d$  \\
  $Q_8$ & $C_3^b$, \quad $C_5^c$, \quad $C_3^b \times C_5^c$  \\
  $C_3 \rtimes C_4$ & $C_5^c$, \quad $C_7^d$, \quad $C_5^c \times C_7^d$ \\
  $\SL(2,3)$ & $C_5^c$, \quad $C_7^d$, \quad $C_5^c \times C_7^d$ \\
  $Q_8 \times C_3$  & $C_5^c$, \quad $C_7^d$, \quad $C_5^c \times C_7^d$\\ \hline
\end{tabular}
\end{center}
\end{table}

 For each of these possibilities, we will now prove that either there is a unique \cut\ Frobenius group of that form or that no \cut\ Frobenius group of that form exists. We will first handle the cases where the Frobenius kernel $F$ is an elementary abelian $p$-group, as we have representation theory at our disposal in these situations. When considering $F$ as an $\FF_p K$-module, it will be written additively. For the convenience of the reader we provide the \textsf{GAP} SmallGroup ID of the group whenever this is appropriate.
 
 \begin{enumerate}[label=(\roman*), ,ref=\roman*, wide, labelwidth=!, labelindent=0pt]
 
   \item\label{C7dC6} $F = C_7^d$, $K = C_6$: We provide details in this case. We will prove that there is a unique \cut\ Frobenius group with kernel $F$ and complement $K$ for each $d$.  Note that $\FF_7$ is a splitting field for $C_6$ and there are two non-isomorphic faithful irreducible $\FF_7 C_6$-modules $W_1$ and $W_2$. A generator $t$ of $C_6$ acts as multiplication by $\bar{3}$ and $\bar{5}$ on $W_1 \simeq \FF_7$ and $W_2 \simeq \FF_7$, respectively. Clearly $W_1 \rtimes C_6$ and $W_2 \rtimes C_6$ are \cut\ Frobenius groups and so are  $W_1^d \rtimes C_6$ and $W_2^d \rtimes C_6$ for all $d \geq 1$ by Lemma \ref{copies_of_cut_groups}. We claim that if \begin{align}\label{F_as_F7C6-module} (V_1 \oplus ... \oplus V_d) \rtimes C_6, \end{align} where each $V_j$ is either isomorphic to $W_1$ or $W_2$, is a \cut\ Frobenius group, then all $V_j$'s have to be isomorphic. For if not, then for the element $x = \left(\begin{smallmatrix} \bar{1}, \bar{1}, ..., \bar{1}  \end{smallmatrix}\right) \in V_1 \oplus V_2 \oplus... \oplus V_d$, the subgroup $\langle x \rangle$ contains only two $G$-conjugates of $x$, i.e.\ $B_G(x) \simeq C_2$, and $x$ is not inverse semi-rational in $G$. Note that the isomorphism type of $G$ does not depend on whether all $V_j$'s in \eqref{F_as_F7C6-module} are isomorphic to $W_1$ or to $W_2$ as $t \mapsto t^5$ induces an automorphism of $C_6$, which in turn induces an isomorphism $W_1^d \rtimes C_6 \to W_2^d \rtimes C_6$. Thus, up to isomorphism, we get a unique \cut\ Frobenius group of the form $C_7^d \rtimes C_6$ for each $d \in \mathbb{Z}_{\geq 1}$ as stated in \eqref{theo:C7dC6} of Theorem \ref{Frobenius_cut}. 
 
   \item $F = C_5^c$, $K = C_4$:  With the same argument as in the previous case, we see that there is a unique \cut\ Frobenius group as given in \eqref{theo:C5cC4}.
 
   \item $F = C_3^b$, $K = C_2$: There is a unique faithful irreducible $\FF_3C_2$-module, where $C_2$ acts by inversion on $\FF_3$. 
   Giving us case \eqref{theo:C32bC2} of Theorem \ref{Frobenius_cut}.

   \item $F = C_3^b$, $K = C_4$: There is a unique faithful irreducible $\FF_3C_4$-module $W$ which is $2$-dimensional as $\FF_3$-vector space and for a generator $t$ of $C_4$, 
   $t^2$ acts as $\left(\begin{smallmatrix} \bar{2} & . \\ . & \bar{2} \end{smallmatrix}\right)$ on $W$. 
   Hence $W \rtimes C_4$ is a \cut\ Frobenius group and, by Lemma \ref{copies_of_cut_groups}, we get that $W^s \rtimes C_4$ is a \cut\ Frobenius group for each $s \geq 1$, which is case \eqref{theo:C32bC4}.

   \item $F = C_3^b$, $K = Q_8$: Again there is a unique faithful irreducible $\FF_3Q_8$-module $W$ which is $2$-dimensional as $\FF_3$-vectorspace and the unique central involution of $K$ acts as $\left(\begin{smallmatrix} \bar{2} & . \\ . & \bar{2} \end{smallmatrix}\right)$ on $F$. We can apply the same argument as above and see that there is a \cut\ Frobenius group of the form $C_3^b \rtimes Q_8$ if and only if $b$ is even. Thus we are in case \eqref{theo:C32bQ8}
   
   \item $F = C_5^c$, $K = Q_8$: Also in this case there is a unique faithful irreducible $\FF_5Q_8$-module $W$ which is $2$-dimensional as $\FF_5$-vectorspace. Now it can be checked that $W \rtimes Q_8$ is even a rational Frobenius group (also known as Markel's group, \textsf{GAP} SmallGroup ID \texttt{[200, 44]}). Note that only the center of $Q_8$ acts by diagonal matrices on $W$. Hence by Lemma \ref{copies_of_cut_groups}, $W^s \rtimes Q_8$ can never be a \cut\ Frobenius group if $s\geq 2$. Thus there is a \cut\ Frobenius group of the form $C_5^c \rtimes Q_8$ if and only if $c = 2$; this is case \eqref{theo:C52Q8}.
   
   \item $F = C_5^c$, $K = C_3 \rtimes C_4$: The group $K$ has a unique faithful irreducible character, which is of degree $2$ and rational valued. Hence there is a unique faithful $\FF_5K$-module $W$ which is $2$-dimensional as $\FF_5$-vectorspace. It can be checked that $G = W \rtimes K$ (\textsf{GAP} SmallGroup ID \texttt{[300, 23]}) is in fact a \cut\ Frobenius group. However $\Z(K) \simeq C_2$ and hence $W^s \rtimes K$ cannot be \cut\ for $s \geq 2$, by Lemma \ref{copies_of_cut_groups}, and we just obtain the group in \eqref{theo:C52C3C4}.
   
   \item $F = C_7^d$, $K = C_3 \rtimes C_4$:\label{C7dC3C4} As in the previous case there is a unique faithful $\FF_7K$-module $W$ which is $2$-dimensional as $\FF_7$-vector space. However in this case the group $W \rtimes K$ (\textsf{GAP} SmallGroup ID \texttt{[588, 33]}) is not \cut\ as for three of its irreducible characters the field of character values is the maximal real subfield of $\QQ(\zeta_7)$.
   
   \item $F = C_5^c$, $K = \SL(2,3)$: By looking at the ordinary character table, one sees that there are three faithful characters of $\SL(2,3)$, however only the representation corresponding to the rational valued character can be used to define a Frobenius group. 
   Denote the corresponding $\FF_5\SL(2,3)$-module by $W$. Then $W$ is absolutely irreducible and $\dim_{\FF_5} W = 2$. It is easily seen that $W \rtimes \SL(2,3)$ is a \cut\ group (e.g.\ because $\SL(2,3)$ acts transitively on $W\setminus \{0\}$). As $\Z(\SL(2,3))$ is cyclic of order $2$, there cannot be a \cut\ Frobenius group of the form $C_5^c \rtimes \SL(2,3)$ for $c > 2$, by Lemma \ref{copies_of_cut_groups}. Hence there is a unique \cut\ Frobenius group of the form $C_5^c \rtimes \SL(2,3)$ and $c = 2$ in this case (with \textsf{GAP} SmallGroup ID \texttt{[600, 150]}). Case \eqref{theo:C52SL23}.
   
   \item $F = C_7^d$, $K = \SL(2,3)$: With the same reasoning as in the previous case, there is a \cut\ Frobenius group of the form $C_7^d \rtimes \SL(2,3)$ if and only if $d = 2$. This group has the \textsf{GAP} SmallGroup ID \texttt{[1176, 215]}, this is case \eqref{theo:C72SL23}.
   
   \item\label{C5cQ8C3} $F = C_5^c$, $K = Q_8 \times C_3$: The group $K$ has two faithful irreducible ordinary characters, they are complex conjugates of each other and of degree $2$. Note that their field of values is $\QQ(\zeta_3)$, a corresponding representation in characteristic $5$ can be realized over $\FF_{25}$ and a corresponding $\FF_5K$-module $W$ has dimension $4$. The group $W \rtimes K$ has $20$ irreducible characters with field of character values $\QQ(\sqrt{5})$. Thus there is no \cut\ Frobenius group of the form $C_5^c \rtimes (Q_8 \times C_3)$.
   
   \item $F = C_7^d$, $K = Q_8 \times C_3$: From the previous case we see that there are two faithful irreducible $\FF_7K$-modules $W_1$ and $W_2$ which can be realized over $\FF_7$. Similar to in the case \eqref{C7dC6}, one can argue that in a \cut\ Frobenius group only one of the isomorphism types can be involved. Note that $\Z(K) \simeq C_6$ and hence these elements act in a faithful $\FF_7$-representation as scalar multiples of the identity. In particular $W_j \rtimes K$ is a \cut\ Frobenius group and then, by Lemma \ref{copies_of_cut_groups}, each $W_j^s \rtimes K$ is a \cut\ Frobenius group ($j \in \{1, 2\}$). Again as in the case \eqref{C7dC6} one can see that both, $W_1^s \rtimes K$ and $W_2^s \rtimes K$, lead to isomorphic groups. Hence there is a (unique) \cut\ Frobenius group of the form $C_7^d \rtimes (Q_8 \times C_3)$ if and only if $d$ is even, which is case \eqref{theo:C72dQ8C3}.
 \end{enumerate}
 
 It remains to handle the cases where the Frobenius kernel is divisible by two different primes. In what follows, we will always assume that $b, c, d$ are positive. Assume first that $G = (C_5^c \times C_7^d) \rtimes (C_3 \rtimes C_4)$ is a \cut\ Frobenius group with kernel $F = C_5^c \times C_7^d$. Then $G/O_5(G) \simeq C_7^d \rtimes (C_3 \rtimes C_4)$ is a \cut\ Frobenius group, contradicting case \eqref{C7dC3C4} above. Similarly, the existence of $(C_5^c \times C_7^d) \rtimes (Q_8 \times C_3)$ would contradict \eqref{C5cQ8C3}.

 \noindent \parbox[t]{\dimexpr\textwidth-\leftmargin}{%
   \vspace{-2.5mm}
   
   \hspace{8pt} Now assume that $G$ is a Frobenius group with kernel $C_3^b \times C_5^c$ and the 
    \begin{wrapfigure}[17]{r}{0.47\textwidth}
     \centering
     \vspace{-.5\baselineskip}
      \caption{Subgroups of $\Aut(C_{15}) \simeq C_2 \times C_4$}\label{AutC15}
     {\tiny
    \begin{tikzpicture}[node distance = 1.65cm, auto]
    \node (1) {1};
    \node (21)[above of=1, left of=1] {$\langle \alpha\rangle$};
    \node (22)[above of=1] {$\langle \beta^2\rangle$};
    \node (23)[above of=1, right of=1] {$\langle \alpha\beta^2\rangle$};
    \node (41)[above of=21] {$C_4 \simeq\langle \beta\rangle$};
    \node (42)[above of=22, align=center] {$\langle \alpha, \beta^2\rangle \simeq$ \\ $C_2 \times C_2$};
    \node (43)[above of=23] {$\langle \alpha\beta\rangle\simeq C_4$};
    \node (A)[above of=42] {$\langle \alpha, \beta\rangle \simeq C_2 \times C_4$};    
    \draw[-] (1) to node {} (21);
    \draw[-] (1) to node {} (22);
    \draw[-] (1) to node {} (23);
    \draw[-] (21) to node {} (42);
    \draw[-] (22) to node {} (41);
    \draw[-] (22) to node {} (42);
    \draw[-] (22) to node {} (43);
    \draw[-] (23) to node {} (42);
    \draw[-] (41) to node {} (A);
    \draw[-] (42) to node {} (A);
    \draw[-] (43) to node {} (A);
    \end{tikzpicture}
  }
\end{wrapfigure}
  complement $K$ is either $C_4$ or $Q_8$. Let $x \in G$ be an element of order $15$. Then $\Aut(\langle x \rangle) = \langle \alpha \rangle \times \langle \beta \rangle \simeq C_2 \times C_4$. As the Frobenius kernel is abelian, $\C_G(x) = F$ and we have that $B_G(x) = \N_G(\langle x \rangle) / \C_G(\langle x \rangle)$ is isomorphic to a subgroup of $K$. But it is also a subgroup of $\Aut(\langle x \rangle)$. As $x$ is inverse semi-rational, we have $|B_G(x)| \geq 4$, hence $B_G(x)$ is cyclic of order $4$ and $B_G(x) = \langle \bar{y} \rangle$, where $y$ acts on $x$ as $\beta$ or as $\alpha\beta$, cf.\ Figure \ref{AutC15}. In the first case $x^5$ is left invariant by $y$, hence $y \in \C_G(x^5)$, contradicting the fact that $G$ is Frobenius. In the second case $x^5$ is stabilized by $y^2$ and $y^2 \in \C_G(x^5)$, again a contradiction with $G$ being a Frobenius group.
 }

  The last remaining case is if $G$ is a \cut\ Frobenius group with kernel $F = C_5^c \times C_7^d$ and complement $K = \SL(2,3)$. If such a group existed, we could consider an element $x \in F$ of order $35$. Then $B_G(x) \leq \Aut(C_{35})$ has to have order $12$ or $24 = \phi(35)$. But $\SL(2,3)$ does not contain a subgroup of order $12$ and $\Aut(C_{35})$ is clearly not isomorphic to $\SL(2,3)$.
 
 \item If $|K|$ is odd, then $K \simeq C_3$ by Proposition \ref{FrobeniusComplements}, and the Frobenius kernel $F$ of $G$ is a $\{2, 5, 7\}$-group by Theorem \ref{ps_solv_cut}. If $|F|$ is be divisible by $5$, then the nilpotent Frobenius kernel $F$ contains an element $z$ of order $5$ in its center. Then $z$ has to be rational by Lemma \ref{lemma_CD}. But this is impossible as $B_G(z) \leq C_3$. We infer that $F$ is a $\{2, 7\}$-group. First assume that $|F|$ is divisible by both primes, $2$ and $7$. Then $F$ has an element $z$ of order $14$ in its center and $F \leq \C_G(x)$. As $G$ is a \cut\ group, $B_G(x) \simeq C_3$. But then the unique involution in $\langle x \rangle$ is central in $G$, which is a contradiction. Consequently, $F$ is a $p$-group for $p = 2$ or $p = 7$. By a classical result of Burnside, see e.g.\ \cite[V, 8.8]{Huppert1}, the Frobenius kernel $F$ is nilpotent of class at most $2$ and thus metabelian. Let $x \in \Z(F)$. Then $\C_G(x) = F$ and $B_G(x) \leq K \simeq C_3$. Hence $\phi(o(x))/2 \mid 3$ and thus $o(x) \mid 4$ (in case $p = 2$) or $o(x) \mid 7$ (in case $p = 7$). Now $F/F' \rtimes K$ is an epimorphic image of $G$, and therefore a \cut\ group with $F/F'$ being abelian. Thus for each $y \in F/F'$ we have $\phi(o(y))/2 \mid 3$ and $F/F'$ is of exponent $4$ (if $p = 2$) or of exponent $7$ (if $p = 7$), respectively. As $F$ is metabelian, $F' \leq \Z(F)$, and the claim on the structure of the group follows. 
 
 Assume now that $F$ is a $2$-group and $x \in F$ is a $2$-element. Then the order of $\Aut(\langle x \rangle)$ is a power of $2$ and hence $B_F(x) = B_G(x)$ and $F$ has to be \cut. The order of $F$ has to be an even power of $2$ as $3 \mid (|F| - 1)$. On the other hand, every \cut\ $2$-group admitting a fixed point free automorphism of order $3$ can clearly be used to define a \cut\ Frobenius group.
 
 If $F$ is a $7$-group, then clearly $\exp F$ is a divisor of $7^2$, by the above, we want to show that $\exp F = 7$. Deny the latter and assume that $\exp F = 49$, then there is an element $y \in F$ of order $49$. Then necessarily $B_G(y) \simeq C_{21}$, as $G$ is \cut. Hence there is an element $x \in F$ of order $k \in \{7, 49\}$ normalizing $\langle y \rangle$, but not centralizing it. But then $H = \langle x, t \rangle \simeq C_k \rtimes C_3$, where $t$ is a generator of the Frobenius complement, and $B_G(y)$ is an epimorphic image of $H$. But this is a contradiction since $H$ does not have an epimorphic image that is cyclic of order $21$. Hence $F$ has exponent $7$. Each $7$-element $x$ has to be inverse semi-rational in $G$, thus it has to have three conjugates in the cyclic subgroup generated by it, so the generator $t$ of a Frobenius complement has to normalize $\langle x \rangle$. Conversely, assume that $F$ is a $7$-group of exponent $7$ such that it admits a fixed point free automorphism $\alpha$ of order $3$ mapping each cyclic subgroup to itself. Then, clearly, $F \rtimes \langle \alpha \rangle$ defines a \cut\ Frobenius group. The result is proved. \qedhere
\end{enumerate}
\end{proof}

\begin{remark} As by \cite[Remark~13]{CD}, groups of odd order are semi-rational if and only if they are \cut\ (= inverse semi-rational), \eqref{kernel7group} of the Theorem \ref{Frobenius_cut} also gives a more precise description of the Frobenius groups appearing in \cite[Theorem 3]{CD}, where the semi-rational groups of odd order are classified. \newline
The arguments from the last part of the proof of \eqref{theorem_even_complement} can also be used to reduce the list of potential semi-rational Frobenius groups in \cite[Theorem~1.2]{ADD}, e.g.\ groups with complement $C_4$ or $Q_8$, and Frobenius kernels with an order divisible by two different primes can be removed from the list as well as groups with complement $Q_{16}$ and kernel being an elementary abelian $17$-group. Some of the arguments also apply to show that the isomorphism type of semi-rational Frobenius groups of a certain type are unique. In general, a semi-rational Frobenius group is not uniquely determined by its Frobenius kernel and complement. The two non-isomorphic Frobenius groups $(C_5 \times C_5) \rtimes C_4$ are both semi-rational (however, only one of them is \cut).
\end{remark}

 We now describe the \cut\ Frobenius groups with abelian kernel.
 
 \begin{corollary}\label{cor:abelian_kernel} Let $G$ be a \cut\ Frobenius group with abelian kernel. Then $G$ appears in Theorem \ref{Frobenius_cut} \eqref{theorem_even_complement} or is isomorphic to
   \[(C_2^a \times C_4^{a'}) \rtimes C_3 \quad \text{or} \quad C_7^d \rtimes C_3\qquad  (a, a' \in 2\mathbb{Z}_{\geq 0},\ a+a' > 0, \ d \in \mathbb{Z}_{\geq 1}).\]
   For each of the above structures there is a unique \cut\ Frobenius group of that form.
 \end{corollary}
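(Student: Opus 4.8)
The plan is to specialize the classification in Theorem~\ref{Frobenius_cut} to the case of an abelian Frobenius kernel. First I would split according to the parity of $|K|$. If $|K|$ is even, the Frobenius kernel of any \cut\ Frobenius group is automatically abelian by \cite[V, 8.18]{Huppert1}, so exactly the groups listed in Theorem~\ref{Frobenius_cut}\eqref{theorem_even_complement} arise, and there is nothing further to do in that case; this accounts for the first alternative in the statement.

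The remaining work is the case $|K|$ odd, where Proposition~\ref{FrobeniusComplements} forces $K \simeq C_3$, and Theorem~\ref{Frobenius_cut}\eqref{odd_complement} tells us $F$ is either a \cut\ $2$-group admitting a fixed-point-free automorphism of order $3$, or an elementary-abelian-by-elementary-abelian $7$-group of exponent $7$ with a suitable fixed-point-free automorphism of order $3$. I would now impose the extra hypothesis that $F$ is abelian. In the $7$-group case, abelian plus exponent $7$ just says $F \simeq C_7^d$ for some $d \in \mathbb{Z}_{\geq 1}$, and by Theorem~\ref{Frobenius_cut}\eqref{kernel7group} (together with the fact that $C_3$ acts fixed-point-freely on $C_7^d$ by scalar multiplication, hence fixes each cyclic subgroup) such a \cut\ Frobenius group $C_7^d \rtimes C_3$ exists; uniqueness up to isomorphism follows from the usual argument (as in case~\eqref{C7dC6} of the proof of Theorem~\ref{Frobenius_cut}) that the two faithful scalar actions of $C_3$ on $\FF_7$ are interchanged by the outer automorphism $t \mapsto t^{-1}$ of $C_3$, and that mixing the two isotypic components would destroy inverse semi-rationality of a diagonal element. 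In the $2$-group case, Theorem~\ref{Frobenius_cut} says $F$ is an extension of an abelian group of exponent dividing $4$ by an abelian group of exponent dividing $4$; under the extra assumption that $F$ itself is abelian this collapses to: $F$ is an abelian $2$-group of exponent dividing $4$, i.e.\ $F \simeq C_2^a \times C_4^{a'}$ for some $a, a' \geq 0$ with $a + a' > 0$. The order constraint $3 \mid |F| - 1$, i.e.\ $|F| \equiv 1 \bmod 3$, together with $|F| = 2^{a+2a'}$ and $2^2 \equiv 1 \bmod 3$, forces $a + 2a'$ to be even. I would then observe that $a$ and $a'$ can be chosen in $2\mathbb{Z}_{\geq 0}$ without loss of generality once one knows which such groups actually admit a fixed-point-free automorphism of order $3$: a fixed-point-free automorphism of order $3$ of an abelian $2$-group exists precisely when each homocyclic component $C_2^a$ and $C_4^{a'}$ has even rank (since $C_3$ must act on the $\FF_2$-space $\Omega_1(F) \simeq \FF_2^{a+a'}$ and on $F/\Omega_1(F) \simeq \FF_2^{a'}$ without nonzero fixed points, and $C_3$ acts fixed-point-freely on $\FF_2^m$ iff $m$ is even), giving $a, a' \in 2\mathbb{Z}_{\geq 0}$.

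Finally I would address existence and uniqueness for the $2$-group series. Existence is immediate: $C_3$ acting on $\FF_4 \simeq \FF_2^2$ (resp.\ on $\ZZ/4 \oplus \ZZ/4$ via the $\ZZ/4$-algebra $\ZZ[\zeta_3]/(4)$ or, more concretely, via a companion matrix of $X^2+X+1$) is fixed-point-free, and taking direct sums of $a/2$ copies of the first and $a'/2$ copies of the second yields a fixed-point-free action of $C_3$ on $C_2^a \times C_4^{a'}$; by Theorem~\ref{Frobenius_cut}\eqref{odd_complement} the associated Frobenius group is \cut. For uniqueness one has to know that the action is essentially forced. Here I would argue that the $\FF_2 C_3$-module structure on $\Omega_1(F) = C_2^a$ and the $\ZZ/4\,C_3$-module structure on the $\ZZ/4$-part are determined up to isomorphism: over $\FF_2$ the only nontrivial irreducible $C_3$-module is the $2$-dimensional $\FF_4$, so a fixed-point-free $\FF_2 C_3$-module of rank $a$ is $\simeq \FF_4^{a/2}$, unique; and the analogous statement over $\ZZ/4$ holds because $\ZZ_2[\zeta_3]$ is the unique unramified degree-$2$ extension and reduction mod $4$ gives the unique fixed-point-free $C_3$-module structure on $(\ZZ/4)^{a'}$. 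Combining with the fact that for an abelian $p$-group all fixed-point-free actions of a fixed group that induce isomorphic module structures on the relevant layers yield isomorphic semidirect products (the extension of $F$ by $C_3$ is split and $F$ is determined), we get uniqueness of $(C_2^a \times C_4^{a'}) \rtimes C_3$. The main obstacle is precisely this last uniqueness argument for the $2$-group: one must carefully pin down the $C_3$-module structure over $\ZZ/4$ (not just over $\FF_2$) and check that no genuinely different fixed-point-free action survives, and verify that different but module-isomorphic actions give isomorphic Frobenius groups; everything else is bookkeeping on top of Theorem~\ref{Frobenius_cut}.
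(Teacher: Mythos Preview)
Your proposal follows the same overall structure as the paper's proof: split on the parity of $|K|$, cite Theorem~\ref{Frobenius_cut}\eqref{theorem_even_complement} for even $|K|$, and for $K \simeq C_3$ treat the $7$-group and $2$-group kernels separately, handling the $7$-case exactly as in case~\eqref{C7dC6}. The one substantive difference is the abelian $2$-group case: the paper simply quotes an external reference (\cite[Section~11]{BH}) for the fact that $C_2^a \times C_4^{a'}$ admits a fixed-point-free automorphism of order~$3$ if and only if $a,a'$ are both even, and that the resulting Frobenius group is then unique up to isomorphism, whereas you sketch a direct module-theoretic argument.

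Your sketch is sound in outline, but there is a slip in the uniqueness paragraph: you write ``the $\FF_2 C_3$-module structure on $\Omega_1(F) = C_2^a$'', whereas in fact $\Omega_1(F) \simeq C_2^{\,a+a'}$ (as you yourself used correctly a few lines earlier), and the decomposition $F = C_2^a \times C_4^{a'}$ is not a priori $C_3$-invariant. The clean way to carry out what you have in mind is to view $F$ as a module over $(\ZZ/4)[C_3] \cong \ZZ/4 \times (\ZZ/4)[\zeta_3]$; fixed-point-freeness kills the first factor, and since $2$ is inert in $\ZZ[\zeta_3]$ the ring $(\ZZ/4)[\zeta_3]$ is a local principal ideal ring whose finite modules are direct sums of copies of $\FF_4$ (underlying group $C_2^2$) and of $(\ZZ/4)[\zeta_3]$ itself (underlying group $C_4^2$). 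This gives both the parity constraints on $a,a'$ and the uniqueness of the $C_3$-action in one stroke, which is essentially the content of the reference the paper cites.
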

 
 \begin{proof} If the order of the complement is even, then the group was handled in Theorem \ref{Frobenius_cut} \eqref{theorem_even_complement}. In the case of a complement of odd order, it follows from the proof of Theorem \ref{Frobenius_cut} \eqref{odd_complement} that the Frobenius kernel $F$ is an abelian group of exponent $7$ or a divisor of $4$. 
 
 If $\exp F = 7$, then we can argue as in \eqref{C7dC6} in the proof of Theorem \ref{Frobenius_cut}, to see that there is a unique \cut\ Frobenius group of the form and $C_7^d \rtimes C_3$ for all $d \in \mathbb{Z}_{\geq 1}$.
 
 Assume that $F$ is a $2$-group. As all groups of the form $C_2^a \times C_4^{a'}$ are \cut, the task amounts to find all Frobenius groups of the form  $(C_2^a \times C_4^{a'}) \rtimes C_3$. But it is well-known that there is such a group if and only if $a, a' \in 2\mathbb{Z}_{\geq 0}$, not both zero, and this group is unique up to isomorphism. This can be extracted, for example, from the more general discussion in \cite[Section~11]{BH}.
 \end{proof}

\begin{remark} Turning our attention to \cut\ Frobenius groups with non-abelian Frobenius kernels we provide examples in both situations, when the kernel is a $7$-group and when it is a $2$-group. For the first, we refer to \cite[Example~1]{CD}. There it is shown that the Heisenberg group $P$ of order $7^3$ (i.e.\ the non-abelian group of order $7^3$ and exponent $7$) admits a fixed-point free automorphism of order $3$, which results in a Frobenius group $P \rtimes C_3$ that is semi-rational (recall that in the case of groups of odd order, \cut\ and semi-rational are the same \cite[Remark~13]{CD}).

The group \begin{align*} X = \langle\ s, t, u, v \ | & \ s^4 = t^4 = u^2 = v^2 = 1 = [s, t] = [u, v],\\ &\ s^u = st^2,\ t^u = s^2t,\ s^v = s^3,\ t^v = s^2t^3 \ \rangle  \simeq (C_4 \times C_4) \rtimes (C_2 \times C_2 ) \end{align*} has exponent $4$ and admits a fixed-point free automorphism $\alpha$ of order $3$, given by
\[s^\alpha = s^3t^3, \quad t^\alpha = s,\quad u^\alpha = uv, \quad v^\alpha = u. \]
Hence $G = X \rtimes \langle \alpha \rangle$ is a Frobenius group. As $X$ has exponent $4$, $G$ is actually a \cut\ group. Using this group, we can construct \cut\ Frobenius groups with non-abelian Frobenius kernels of order $2^{6 + 2e}\cdot 3$, for each $e \in \mathbb{Z}_{\geq 0}$. 

Now consider the group \begin{align*} Y = \langle\ s, t, u, v \ | & \ s^4 = u^4 = 1,\ s^2 = t^2 \in \Z(Y),\ u^2 = v^2 \in \Z(Y), \\ &\ [s, t] = u^2, [s, u] =  s^2, [s, v] = u^2, \\ &\ [t, u] = u^2, [t, v] = s^2, [u, v] = s^2 \ \rangle.   \end{align*} Then $\Z(Y) = Y' = \langle s^2, u^2 \rangle$ is isomorphic to a Klein four group $C_2^2$ and $Y/\Z(Y)$ is isomorphic to $C_2^4$. The group $Y$ also admits a fixed-point free automorphism $\beta$ of order $3$ and $\exp Y = 4$, so that we get another example of a \cut\ Frobenius group $Y \rtimes \langle \beta \rangle$, now with a Frobenius kernel that is not split. The \textsf{GAP} SmallGroup IDs of these groups are \texttt{[64, 242]}, \texttt{[192, 1023]}, \texttt{[64, 245]} and \texttt{[192, 1025]}, respectively.

All other examples of \cut\ Frobenius groups with indecomposable (i.e.\ it cannot be written as direct product of smaller groups) non-abelian Frobenius kernel of order at most $2^8\cdot 3$ have SmallGroup IDs \texttt{[768, 1083600]}, \texttt{[768, 1083604]}, \texttt{[768, 1083733]}, \texttt{[768, 1085039]} with Frobenius kernel \texttt{[256, 6815]}, \texttt{[256, 6817]}, \texttt{[256, 10326]}, \texttt{[256, 55682]}, respectively. The latter all have exponent $4$ (except \texttt{[256, 10326]}, which has exponent $8$).
\end{remark}

A group $G$ is called a \emph{Camina group}, if its derived subgroup $G'$ is different from $1$ and $G$ and $x^G = xG'$ for all $x \in G \setminus G'$, i.e.\ the conjugacy class and the coset modulo $G'$ coincide for elements outside $G'$. (Note that in some definitions $G' = 1$ is allowed for Camina groups.) In \cite[Theorem~4]{BMP} it is proved that a Camina $p$-group has the \cut\ property if and only if $p \in \{2, 3\}$. With the classification of all Camina groups by R.~Dark and C.M.~Scoppla cf.\ \cite{DarkScoppola,IsaacsLewis} and the above classification of \cut\ Frobenius groups we obtain the following result. 

\begin{corollary}\label{cor:camina}
A Camina group $G$ is a \cut\ group if and only if one of the following holds:
\begin{itemize}
 \item $G$ is a $2$- or a $3$-group,
 \item $G$ is a Frobenius group of the form \[ (C_2^{2n} \times C_4^{2m}) \rtimes C_3,\ C_3^n \rtimes C_2,\ C_3^{2n} \rtimes C_4,\ C_3^{2n} \rtimes Q_8, \ C_5^{2} \rtimes Q_8 \] for $n, m \in \mathbb{Z}_{\geq 1}$.
 \item $G$ a Frobenius group of the form \[C_7^n \rtimes C_3,\ C_5^{n} \rtimes C_4,\ C_7^{n} \rtimes C_6, \] for $n \in \mathbb{Z}_{\geq 1}$, where a generator of the complement raises each element of the Frobenius kernel to the same power.
 \item $G$ is a \cut\ Frobenius group with a cyclic complement of order $3$ and non-abelian kernel as described in Theorem \ref{Frobenius_cut} \eqref{odd_complement}.
\end{itemize}
\end{corollary}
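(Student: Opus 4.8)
The plan is to combine three ingredients: the classification of finite Camina groups of Dark and Scoppola \cite{DarkScoppola,IsaacsLewis}, the classification of \cut\ Frobenius groups obtained in Theorem \ref{Frobenius_cut} and Corollary \ref{cor:abelian_kernel}, and the theorem of Bakshi--Maheshwary--Passi \cite[Theorem~4]{BMP} that a Camina $p$-group is \cut\ if and only if $p\in\{2,3\}$. By the Dark--Scoppola classification, a Camina group $G$ is either a Camina $p$-group or a Frobenius group, and I would treat these two cases separately. In the first case \cite[Theorem~4]{BMP} immediately yields that $G$ is \cut\ exactly when $p\in\{2,3\}$, which is the first item of the statement together with its converse.

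Now suppose $G=F\rtimes K$ is a Frobenius group with kernel $F$ and complement $K$. Since the action of $K$ on $F$ is coprime and fixed-point-free, $F=[F,K]\le G'$, so from $G/F\simeq K$ one gets $G'=FK'$ and $[G:G']=[K:K']$. Every element of $G\setminus G'$ is $G$-conjugate to some $y\in K\setminus K'$, and $\C_G(y)=\C_K(y)$ by the Frobenius property, hence $|y^G|=|F|\cdot|y^K|$ while $|yG'|=|G'|=|F|\cdot|K'|$; since $y^G\subseteq yG'$ always holds, $G$ is a Camina group if and only if $|y^K|=|K'|$ for every $y\in K\setminus K'$ --- a condition on $K$ alone, which I will refer to as $K$ being a \emph{Camina complement}. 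The next step is to run through the eight possible Frobenius complements of \cut\ Frobenius groups listed in Proposition \ref{FrobeniusComplements}: the cyclic ones $C_2,C_3,C_4,C_6$ are Camina complements for free, $Q_8$ is one as well, while $C_3\rtimes C_4$, $\SL(2,3)$ and $Q_8\times C_3$ are not --- certified respectively by an element of order $6$, an element of order $3$, and a central element of order $3$, each with centralizer strictly larger than $[K:K']$.

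Assembling this: if $G$ is a \cut\ Frobenius group that is Camina, Theorem \ref{Frobenius_cut} gives the finite list of candidates, and discarding those whose complement is not a Camina complement leaves, for $|K|$ even, exactly $C_3^b\rtimes C_2$, $C_3^{2b}\rtimes C_4$, $C_5^c\rtimes C_4$, $C_3^{2b}\rtimes Q_8$, $C_5^2\rtimes Q_8$ and $C_7^d\rtimes C_6$; for $|K|$ odd one has $K\simeq C_3$ by Proposition \ref{FrobeniusComplements}, which is a Camina complement, and Theorem \ref{Frobenius_cut}\,\eqref{odd_complement} together with Corollary \ref{cor:abelian_kernel} splits the admissible kernels into the abelian cases $(C_2^a\times C_4^{a'})\rtimes C_3$ and $C_7^d\rtimes C_3$ and the non-abelian ones, which are precisely the groups of the fourth item. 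The clause that a generator of the complement raises every kernel element to the same power, for $C_5^n\rtimes C_4$, $C_7^n\rtimes C_6$ and $C_7^n\rtimes C_3$, is read off the explicit modules in the proof of Theorem \ref{Frobenius_cut}: over $\FF_5$, respectively $\FF_7$, a faithful irreducible module for $C_4$, respectively $C_6$ or $C_3$, is one-dimensional, so the generator acts by a scalar. The converse is then immediate: each group in the four items is \cut\ by \cite[Theorem~4]{BMP}, Theorem \ref{Frobenius_cut} or Corollary \ref{cor:abelian_kernel}, and is Camina, being either a Camina $p$-group or a Frobenius group with complement one of $C_2,C_3,C_4,C_6,Q_8$.

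I expect the main obstacle to be the bookkeeping in the last two steps --- in particular certifying that $C_3\rtimes C_4$, $\SL(2,3)$ and $Q_8\times C_3$ fail to be Camina complements, and matching the scalar-action description to the ``same power'' formulation in the third item; once Proposition \ref{FrobeniusComplements}, Theorem \ref{Frobenius_cut}, Corollary \ref{cor:abelian_kernel} and \cite[Theorem~4]{BMP} are available, no further substantial argument is needed.
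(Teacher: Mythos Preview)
Your proposal is correct and follows the same approach the paper sketches: invoke the Dark--Scoppola/Isaacs--Lewis classification to reduce to $p$-groups and Frobenius groups, handle $p$-groups via \cite[Theorem~4]{BMP}, and intersect the Frobenius case with Theorem~\ref{Frobenius_cut} and Corollary~\ref{cor:abelian_kernel}. The paper gives only a one-line pointer to these ingredients, whereas you supply the explicit ``Camina complement'' criterion $|y^K|=|K'|$ for all $y\in K\setminus K'$ and verify it against the eight complements of Proposition~\ref{FrobeniusComplements}; this is a clean way to make the intersection step concrete and is entirely in the spirit of the paper's intended argument.
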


\noindent \textbf{Acknowledegment.}\quad The author is grateful to Silvio Dolfi for an interesting communication on inverse semi-rational groups and would like to thank Eric Jespers for stimulating discussions.

\bibliographystyle{amsalpha}
\bibliography{cut}

\providecommand{\bysame}{\leavevmode\hbox to3em{\hrulefill}\thinspace}
\providecommand{\MR}{\relax\ifhmode\unskip\space\fi MR }
\providecommand{\MRhref}[2]{%
  \href{http://www.ams.org/mathscinet-getitem?mr=#1}{#2}
}
\providecommand{\href}[2]{#2}
\begin{thebibliography}{BMP17}

\bibitem[ADD16]{ADD}
S.H. Alavi, A.~Daneshkhah, and M.R. Darafsheh, \emph{On semi-rational
  {F}robenius groups}, J. Algebra Appl. \textbf{15} (2016), no.~2, 1650033, 8.

\bibitem[BH98]{BH}
R.~Brown and D.K. Harrison, \emph{Abelian {F}robenius kernels and modules over
  number rings}, J. Pure Appl. Algebra \textbf{126} (1998), no.~1-3, 51--86.

\bibitem[BMP17]{BMP}
G.K. Bakshi, S.~Maheshwary, and I.B.S. Passi, \emph{Integral group rings with
  all central units trivial}, J. Pure Appl. Algebra \textbf{221} (2017), no.~8,
  1955--1965.

\bibitem[CD10]{CD}
D.~Chillag and S.~Dolfi, \emph{Semi-rational solvable groups}, J. Group Theory
  \textbf{13} (2010), no.~4, 535--548.

\bibitem[DS96]{DarkScoppola}
R.~Dark and C.M. Scoppola, \emph{On {C}amina groups of prime power order}, J.
  Algebra \textbf{181} (1996), no.~3, 787--802.

\bibitem[DS04]{DS}
M.~R. Darafsheh and H.~Sharifi, \emph{Frobenius {$\mathbb Q$}-groups}, Arch.
  Math. (Basel) \textbf{83} (2004), no.~2, 102--105.

\bibitem[FeS86]{FS}
E.~Farias~e Soares, \emph{Big primes and character values for solvable groups},
  J. Algebra \textbf{100} (1986), no.~2, 305--324.

\bibitem[GAP16]{GAP4}
The GAP~Group, \emph{{GAP -- Groups, Algorithms, and Programming, Version
  4.8.3}}, 2016, \url{http://www.gap-system.org}.

\bibitem[Gow76]{Gow}
R.~Gow, \emph{Groups whose characters are rational-valued}, J. Algebra
  \textbf{40} (1976), no.~1, 280--299.

\bibitem[Hup67]{Huppert1}
B.~Huppert, \emph{Endliche {G}ruppen. {I}}, Die Grundlehren der Mathematischen
  Wissenschaften, Band 134, Springer-Verlag, Berlin-New York, 1967.

\bibitem[IL15]{IsaacsLewis}
I.M. Isaacs and M.L. Lewis, \emph{Camina {$p$}-groups that are generalized
  {F}robenius complements}, Arch. Math. (Basel) \textbf{104} (2015), no.~5,
  401--405.

\bibitem[JdR16]{JdR1}
E.~Jespers and {\'A}.~del R{\'{\i}}o, \emph{{Group ring groups. Volume 1:
  Orders and generic constructions of units}}, Berlin: De Gruyter, 2016.

\bibitem[Mah16]{Maheshwary}
S.~Maheshwary, \emph{Integral group rings with all central units trivial:
  solvable groups}, preprint (2016), 8 pages,
  \href{https://arxiv.org/abs/1612.08344v1}{\nolinkurl{arXiv:1612.08344v1
  [math.RA]}}.

\bibitem[Pas68]{Passman}
D.~Passman, \emph{Permutation groups}, W. A. Benjamin, Inc., New
  York-Amsterdam, 1968.

\bibitem[RS90]{RS}
J.~Ritter and S.K. Sehgal, \emph{Integral group rings with trivial central
  units}, Proc. Amer. Math. Soc. \textbf{108} (1990), no.~2, 327--329.

\bibitem[Sco87]{ScottGT}
W.~R. Scott, \emph{Group theory}, second ed., Dover Publications, Inc., New
  York, 1987.

\bibitem[Ten12]{Tent}
J.F. Tent, \emph{Quadratic rational solvable groups}, J. Algebra \textbf{363}
  (2012), 73--82.

\end{thebibliography}

\end{document}